\documentclass[12pt]{article}
\synctex=1
\usepackage{amsmath,amssymb,mathtools}
\usepackage{amsthm}
\usepackage{amscd}
\usepackage{epsfig}
\usepackage{graphics}
\usepackage{graphicx}
\usepackage{colordvi}
\usepackage{enumitem}
\usepackage{mathrsfs} 
\usepackage[colorlinks]{hyperref}
\usepackage{dsfont}
\usepackage{xcolor}
\usepackage{mparhack}
\usepackage{xcolor,lipsum}
\usepackage[displaymath, mathlines]{lineno}
\modulolinenumbers[5]
\nolinenumbers
\allowdisplaybreaks[4]
\usepackage[normalem]{ulem}
\renewcommand\sout{\bgroup\markoverwith{\textcolor{red}{\rule[0.5ex]{2pt}{1pt}}}\ULon}

\hypersetup{
linkcolor=blue!75!black,
urlcolor=blue!75!black,
citecolor=blue!75!black}

\theoremstyle{plain}

\theoremstyle{definition}

\theoremstyle{remark}

\numberwithin{equation}{section}

\newcommand{\1}{\mathds 1}
\newcommand{\error}{{\boldsymbol \gamma}}

\newcommand{\C}{{\mathscr C}}

\newcommand{\R}{{\Bbb R}}

\newcommand{\vep}{{\varepsilon}}

\newcommand{\lmt}{\longmapsto}

\newcommand{\sgn}{{\rm sgn}}

\renewcommand{\d}{{\rm d}}

\newcommand{\bs}{\boldsymbol}
\newcommand{\ms}{\mathscr}
\renewcommand{\P}{{\mathbb P}}

\newcommand{\E}{{\mathbb E}}

\newcommand{\spin}{{\bs \sigma}}
\newcommand{\sspin}{{\bs \rho}}
\newcommand{\spinn}{{\bs \vep}}
\renewcommand{\i}{{\mathbf i}}

\renewcommand{\k}{{\mathbf k}}
\newcommand{\la}{\langle}
\newcommand{\ra}{\rangle}
\renewcommand{\l}{{\ell}}
\newcommand{\D}{{\mathrm D}}

\newcommand{\vertiii}[1]{{\left\vert\kern-0.25ex\left\vert\kern-0.25ex\left\vert #1 
    \right\vert\kern-0.25ex\right\vert\kern-0.25ex\right\vert}}

\newtheoremstyle{slantthm}{10pt}{10pt}{\slshape}{}{\bfseries}{}{.5em}{\thmname{#1}\thmnumber{ #2}\thmnote{ (#3)}.}
\newtheoremstyle{slantrmk}{10pt}{10pt}{\rmfamily}{}{\bfseries}{}{.5em}{\thmname{#1}\thmnumber{ #2}\thmnote{ (#3)}.}

\topmargin -0.5in \textwidth 7.3in \textheight 9in
\oddsidemargin
-.445in \evensidemargin -.1in\marginparwidth 0.4in

\begin{document}

\newtheoremstyle{slantthm}{10pt}{10pt}{\slshape}{}{\bfseries}{}{.5em}{\thmname{#1}\thmnumber{ #2}\thmnote{ (#3)}.}
\newtheoremstyle{slantthmp}{10pt}{10pt}{\slshape}{}{\bfseries}{}{.5em}{\thmname{#1}\thmnumber{ #2}\thmnote{ (#3)}.}
\newtheoremstyle{slantrmk}{10pt}{10pt}{\rmfamily}{}{\bfseries}{}{.5em}{\thmname{#1}\thmnumber{ #2}\thmnote{ (#3)}.}

\theoremstyle{slantthm}
\newtheorem{thm}{Theorem}[section]
\newtheorem{prop}[thm]{Proposition}
\newtheorem{lem}[thm]{Lemma}
\newtheorem{cor}[thm]{Corollary}
\newtheorem{defi}[thm]{Definition}
\newtheorem{prob}[thm]{Problem}
\newtheorem{disc}[thm]{Discussion}
\newtheorem*{nota}{Notation}
\newtheorem*{conj}{Conjecture}
\theoremstyle{slantrmk}
\newtheorem{rmk}[thm]{Remark}
\newtheorem{eg}[thm]{Example}
\newtheorem{step}{Step}
\newtheorem{claim}{Claim}
\theoremstyle{plain}
\newtheorem{thmm}{Theorem}[section]

\title{{\bf Universality of Ghirlanda--Guerra identities and spin distributions in mixed $p$-spin models}}

\author{Yu-Ting Chen\footnote{Department of Mathematics, University of Tennessee, Knoxville, United States of America.}}

\maketitle

\abstract{We prove universality of the Ghirlanda--Guerra identities and spin distributions in the mixed $p$-spin models. The assumption for the universality of the identities requires exactly that the coupling constants have zero means and finite variances, and the result applies to the Sherrington--Kirkpatrick model. As an application, we obtain weakly convergent universality of spin distributions in the generic $p$-spin models under the condition of two matching moments. In particular, certain identities for 3-overlaps and 4-overlaps under the Gaussian disorder follow. Under the stronger mode of total variation convergence, we find that universality of spin distributions in the mixed $p$-spin models holds if mild dilution of connectivity by the Viana--Bray diluted spin glass Hamiltonians is present and the first three moments of coupling constants in the mixed $p$-spin Hamiltonians match. 
These universality results are in stark contrast to the characterization of spin distributions in the undiluted mixed $p$-spin models, which is known up to now that four matching moments are required in general. \\

\noindent\emph{Keywords:} Mixed $p$-spin models; the Ghirlanda--Guerra identities; Universality; Ultrametricity; the Viana--Bray diluted spin glass model\\

\noindent\emph{Mathematics Subject Classification (2000):} 82D30, 60K35, 82B44 
}

\tableofcontents

\section{Introduction}\label{sec:intro}
In this paper, we investigate disorder universality in the mixed $p$-spin models. This concerns the phenomenon that, in the thermodynamic limit, probability distributions of the models do not depend on the particular distribution of the coupling constants.
As a way to obtain universality of the Parisi ultrametricity, 
Auffinger and Chen \cite{AC_15} prove universality of the Gibbs measures in these models. The condition in \cite{AC_15} requires either enough
matching moments of the coupling constants (when compared to the corresponding moments of a standard Gaussian) or the absence of pure lower-spin Hamiltonians. 
Our interests here are mainly motivated by related beliefs in the applicability
of sharper conditions in the physics literature. We consider the Parisi ultrametricity and characterizations of the Gibbs measures under the condition of fewer matching moments,
for presumably the most important case where the pure $2$-spin Hamiltonians, namely the Sherrington--Kirkpatrick Hamiltonians, are present.

We work with the following mixed $p$-spin models throughout this paper unless otherwise mentioned. Given i.i.d. real-valued random variables $\xi_{\mathbf i}$, for $\mathbf i\in \{1,\cdots,N\}^p$ and $p\geq 2$, with zero means and unit variances, a mixed $p$-spin Hamiltonian is defined by 
\begin{align}\label{def:H}
H_\xi(\spin)=H_{N,\xi}(\spin)\equiv  \sum_{p\geq 2}H_{\xi,p}(\spin)+h\sum_{i=1}^N \sigma_i,
\end{align}
where $\spin=(\sigma_1,\cdots,\sigma_N)\in\Sigma_N\doteq \{-1,1\}^N$ and $h$ is a fixed real constant.
The Hamiltonian in (\ref{def:H}) is a sum of the external magnetic field $h\sum_{i=1}^N\sigma_i$ and the following pure $p$-spin Hamiltonians: 
\begin{align}\label{def:Hp}
H_{\xi,p}(\spin)=H_{N,\xi,p}(\spin)\equiv \frac{\beta_p}{N^{(p-1)/2}}\sum_{\mathbf i\in \{1,\cdots, N\}^p}\xi_{\mathbf i}\sigma_{\mathbf i},
\end{align}
with the notation 
\[
\sigma_{\mathbf i}=\sigma_{i_1}\cdots \sigma_{i_p}
\]
for $\mathbf i=(i_1,\cdots,i_p)$.
Here in (\ref{def:Hp})  and throughout this paper, the real constants $\beta_p$ are temperature parameters assumed to satisfy $\sum_{p=2}^\infty \beta_p^2 2^p<\infty$ so that the Hamiltonian $H_\xi$ is well-defined almost surely.  
The above assumption on the disorder $\xi=(\xi_{\mathbf i})$ means that the random field $H_\xi(\spin)$, $\spin\in \Sigma_N$, is centered and has the same covariance matrix, with entries typically of order $N$, as the covariance matrix under the (standard) Gaussian disorder. Nonetheless, useful symmetry properties which are valid under the Gaussian disorder can be broken.   
The Gibbs measure on $\Sigma_N$ induced by the Hamiltonian in (\ref{def:H}) is denoted by
\begin{align}\label{Gibbs}
G_{N,\xi}(\{\spin\})\equiv \frac{e^{H_{N,\xi}(\spin)}}{Z(H_{N,\xi})},\quad \spin \in \Sigma_N,
\end{align}
where the partition function $Z(H_{N,\xi})$ enters the definition of $G_{N,\xi}$ as a normalizing factor. 

Mathematical investigations of disorder universality in the mixed $p$-spin models 
have been focused around the Sherrington--Kirkpatrick model~\cite{Sherrington_1975} and go back  to Talagrand~\cite{Talagrand_02} where a comparison principle for expected  extrema of correlated random variables is studied. For the Sherrington--Kirkpatrick model, \cite{Talagrand_02} proves that expected values of
the rescaled ground-state energy $N^{-1}\min_{\spin\in \Sigma_N} H_{N,\,\cdot\, ,2}(\spin)$ under the Gaussian disorder and the Bernoulli disorder coincide
in the thermodynamic limit. The method there  
considers the analogous universality question
for the free-energy densities, 
\[
\frac{1}{N}\log Z(H_{N,\,\cdot\,,2});
\]
working with these quantities has the advantage that, by a generalized Gaussian integration by parts, the rates of change of these free-energy densities along an interpolation between the two disorders vanish in the limit of large system size.
Later on, Guerra and Toninelli~\cite[Section~4.2]{Guerra_2002} show that their method for the existence of limits of free-energy densities in the Sherrington--Kirkpatrick model extends to those under general disorder. Precisely, the assumption in \cite{Guerra_2003} states symmetric distributions with finite fourth moments of coupling constants. Carmona and Hu~\cite{CH_06} then extend Talagrand's method in \cite{Talagrand_02},
and obtain a mathematical proof that limiting free-energy densities in the Sherrington--Kirkpatrick model depend only on the first two moments of coupling constants (cf. Kirkpatrick and Sherrington \cite{Kirkpatrick_1978}), as well as an analogous result for the rescaled ground-state energy. (Generalizations to the mixed $p$-spin models are also obtained in \cite{CH_06}.) Regarding the additional technical assumption in \cite{CH_06} of finite third moments, Chatterjee~\cite[Section~3]{C_05} observed that it can be circumvented by Lindeberg's argument for the central limit theorem (cf. \cite{Trotter_1959}). 

The first main result of this paper  (Theorem~\ref{thm:1}) proves disorder universality of the Ghirlanda--Guerra identities in the mixed $p$-spin models defined by (\ref{def:H}). Therefore the identities under the Gaussian disorder remain valid when coupling constants satisfying the condition of two matching moments are applied. Since the discovery of these identities in \cite{Ghirlanda_1998}, they have proven fundamental to understand the main characteristics of the mixed $p$-spin models regarding the overlap arrays:
\begin{align}\label{def:overlap}
R_{\l,\l'}= \frac{1}{N}\sum_{i=1}^N\sigma^\l_{i}\sigma^{\l'}_{i}, \quad \ell\neq \ell',
\end{align}
where $\spin^1,\spin^2,\cdots $ are replicas as i.i.d. samples from the Gibbs measures over $N$ sites.
In particular, the identities 
are the driving force for the mathematical proofs of the Parisi formula and the Parisi ultrametricity in the hands of Panchenko \cite{P_13, Panchenko_2014_Parisi}, where the work \cite{Panchenko_2014_Parisi} extends Talagrand's ground-breaking proof for the Parisi formula in the even $p$-spin models \cite{Talagrand_2006_P}. The major step of the simple proof of Theorem~\ref{thm:1} shows that a generalization of the Gaussian integration by parts as in \cite{CH_06} suffices. It is used to obtain certain integration by parts  involving the internal energy densities $N^{-1}H_{N,\xi}$, 
which is central to the derivation of the identities. As immediate consequences of the universality of the identities, we obtain, 
in the particular case of generic $p$-spin models (see (\ref{def:generic}) for the definition), 
universality of the Gibbs measures of pure states \cite{Mezard_1984} in the sense of Talagrand's construction \cite[Theorem~2.4]{Talagrand_2009} and universality of the Parisi ultrametricity by Panchenko's theorem \cite[Theorem~1]{P_13}.

It is proven in \cite{AC_15}, among other things, universality of the Parisi ultrametricity in the mixed $p$-spin models. The approach in \cite{AC_15} proceeds with universality of the Gibbs measures in the sense that   the Gibbs measures over finitely many configurations  are compared under different disorders by total variation distances. Precisely, the following convergences are obtained in \cite{AC_15}:
\begin{align}\label{AC_result}
\lim_{N\to\infty}\sup_{F:\Sigma^n_N\to [-1,1]}\left|\E\langle F\rangle_{H_{N,\xi}}-\E\langle F\rangle_{H_{N,g}}\right|=0,\quad \forall\; n\in \Bbb N,
\end{align}
where
\[
\langle F\rangle_{H_{N,\xi}} =\langle F(\spin^1,\cdots,\spin^n)\rangle_{H_{N,\xi}}=\sum_{\spin^1,\cdots,\spin^n\in \Sigma_N}F(\spin^1,\cdots,\spin^n)G_{N,\xi}(\{\spin^1\})\cdots G_{N,\xi}(\{\spin^n\}).
\]
The methods in \cite{CH_06, C_05} by interpolations and generalized integration by parts discussed above are applied for the proof of (\ref{AC_result}) in \cite{AC_15}. See also \cite{Crawford_2007, Genovese_2012} for similar applications to other spin glass models.

In stark contrast to the various universality phenomena discussed before (\ref{AC_result}),  
at least one of the following conditions
is required in \cite[Theorem~4.3]{AC_15} to validate (\ref{AC_result}): (1) the first four  moments of the coupling constants match those of a standard Gaussian, or (2) there is absence of both pure $2$-spin Hamiltonians and pure $3$-spin Hamiltonians. 
While Theorem~\ref{thm:1} improves conditions for universality of the Parisi ultrametricity, 
\cite{AC_15} still leaves open a natural 
question whether Gibbs measures in the Sherrington--Kirkpatrick model can be characterized by fewer matching moments of the coupling constants. 
At least it should raise the question whether in the limit, disorder as simple as the Bernoulli disorder, which is of interest in the study of the Edward-Anderson model (e.g. \cite{Toulouse_1986, Nishimori_2001}),
leads to the same probabilistic features of the mean-field spin glass models in terms of
Gibbs measures.

We revisit universality of Gibbs measures in the mixed $p$-spin models in the next two main results of this paper. The second main result studies spin distributions, namely finite-dimensional marginals of coordinates of replicas, and  
obtains their weakly convergent universality
in the particular case of generic $p$-spin models (Theorem~\ref{thm:2}). The third main result continues to use the total variation distances as in  (\ref{AC_result}) to compare Gibbs measures 
  in the mixed $p$-spin models under two different disorders, and now independent Viana--Bray diluted $2$-spin glass Hamiltonians \cite{Viana_1985} (defined in (\ref{def:H'})) with small magnitude are present (Theorem~\ref{thm:3}). The rich
structural similarities in the Gibbs measures under different disorders implied by the universality of the Ghirlanda--Guerra identities 
allows us to work beyond the scope of earlier methods for universality of spin glass models, and thereby, make possible these two main results.

The universality of spin distributions in the generic $p$-spin models holds under precisely the condition of two matching moments. The major step of the proof considers an insightful application of the cavity method  in \cite{Panchenko_2013_spin}, which shows nontrivial connections between the spin distributions and joint distributions of the overlap arrays, and we obtain its extension under general disorder. 
(See also \cite[Proposition~A.7]{AJ}.) 
The proof shows that the cavity equation considered in \cite{Panchenko_2013_spin} (cf. (\ref{eq:cavityg})), which features Gaussian factors from the Hamiltonians over cavity sites, is approximately valid under general disorder. This approximate equation allows the key application as in \cite{Panchenko_2013_spin} that the spin distributions depend almost continuously on joint distributions of the overlap arrays defined in (\ref{def:overlap}). With this connection, the uniqueness and universality of the joint distributions under the condition of two matching moments, which are readily satisfied by the generic $p$-spin models, extend to the spin distributions. 
In particular, 
the universality of spin distributions leads to certain identities for $3$-overlaps and $4$-overlaps under the Gaussian disorder whenever pure $2$-spin Hamiltonians are present (Proposition~\ref{prop:1}). These identities seem to be new to the 
generic $p$-spin models although they
already appear in the literature of diluted spin glass models.

The third main result considers removal of the abundance of higher-spin Hamiltonians defining the generic $p$-spin models and applies to any choice of temperature parameters (Theorem~\ref{thm:3}). 
We require three matching moments and existence of fourth moments in the mixed $p$-spin Hamiltonians. The proof shows that when the interpolation method between two disorders as in \cite{CH_06,AC_15} is applied, mild dilution of connectivity in the mixed $p$-spin physical systems by the Viana--Bray Hamiltonians, as well as the Ghirlanda--Guerra identities, lead to vanishing rates of change of the Gibbs expectations. 
The way we dilute the mixed $p$-spin Hamiltonians is chosen so that
the connectivity of the perturbative systems can range from order $1$ to  any growing order obeying the limiting behavior $o(N)$ and infinite temperature limit is taken. In particular, limiting free-energy densities in 
the diluted mixed $p$-spin models and the undiluted ones coincide. 

It is arguable whether 
any of the diluting perturbations may change the nature of Gibbs measures in the mixed $p$-spin models. 
The last main result, however, shows that, even in the absence of the regular overlap structure in the generic $p$-spin models, the condition of four matching moments for the universality of Gibbs measures in the mixed $p$-spin models is not robust, and can be improved if there are seemingly natural impurities as the diluting spin glass Hamiltonians.

\paragraph{\bf Organization of the paper.} Section~\ref{sec:UGG} presents the first universality result in this paper, which is for the Ghirlanda--Guerra identities. Our study of the universality of spin distributions is divided into Sections~\ref{sec:UMO} and \ref{sec:convtv}. The former section considers the universality under weak convergence, whereas the latter section considers the universality under total variation convergence. Between these two sections, we present in Section~\ref{sec:D4} some combinatorial connections between Gibbs measures and multi-overlaps. These connections refine some results in the earlier sections to prepare for
Section~\ref{sec:convtv}.
For completeness, we close this paper with a generalization of the Gaussian integration by parts  in Section~\ref{sec:AIBP}.  
\vspace{-.2cm}

\paragraph{\bf Acknowledgements.} Partial supports from the Center of Mathematical Sciences and Applications at Harvard University during the author's previous position and from NCTS Taipei during a visit
are gratefully acknowledged. The author is indebted to Prof. Horng-Tzer Yau for enlightening discussions and would like to thank Philippe Sosoe for numerous clarifying conversations. The author also thanks an anonymous referee for very helpful comments that improve upon the presentation of this paper.

\section{Universality of the Ghirlanda--Guerra identities}\label{sec:UGG}
In this section, we prove universality of the Ghirlanda--Guerra identities. 
First, let us introduce some notations to facilitate the use of the Gaussian integration by parts and its generalizations throughout this paper. 
For any $p\geq 2$ and $\mathbf i\in \{1,2,\cdots, N\}^p$, we define
a multiplication operator $\D_{\mathbf i}$, acting on the set of real-valued functions of finitely many spin configurations over $N$ sites, by
 \begin{align}\label{def:Dij}
 \D_{\mathbf i}F(\spin^1,\cdots,\spin^n,\spin^{n+1})\doteq 
\left(\sum_{\ell=1}^n\sigma^\ell_{\mathbf i}-n\sigma^{n+1}_{\mathbf i}\right)\cdot  F(\spin^1,\cdots,\spin^n).
 \end{align} 
Also, for a function $f=f(x)$, we write $\partial_x^jf$ for its $j$-th derivative.

The following theorem is the first main result of this paper.

\begin{thm}
\label{thm:1}
Consider the mixed $p$-spin model defined by the Hamiltonian in (\ref{def:H}). Assume that the coupling constants $(\xi_{\mathbf  i})$ are i.i.d. and satisfy
\begin{align}\label{assump:mom}
\E\xi_{\mathbf i}=0\quad\mbox{and}\quad \E\xi_{\mathbf i}^2\in (0,\infty).
\end{align}
Then for all $p\geq 2$ and sequences of temperature parameters $(\beta_q)_{q\geq 2}$ such that $\sum_{q=2}^\infty 2^q\beta_q^2<\infty$ and $\beta_p\neq 0$, it holds that, for every $n\in \Bbb N$,
\begin{align}\label{GG:lim}
\lim_{N\to\infty}\sup_{F:\Sigma^n_N\to [-1,1]}\left|\E\langle R_{1,n+1}^pF\rangle_{H_{N,\xi}}-\frac{1}{n}\E\langle R^p_{1,2}\rangle_{H_{N,\xi}} \E\langle F\rangle_{H_{N,\xi}} -\frac{1}{n}\sum_{\ell=2}^n \E\langle R^p_{1,\ell}F\rangle_{H_{N,\xi}}\right|=0.
\end{align}
\end{thm}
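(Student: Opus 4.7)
The strategy mirrors the classical derivation of the Ghirlanda--Guerra identities under Gaussian disorder: combine (i) an explicit asymptotic formula for $\E\langle N^{-1}H_{\xi,p}(\spin^1)F\rangle$ in terms of $p$-th powers of overlaps, with (ii) concentration of the rescaled internal energy $N^{-1}H_{\xi,p}$. Under Gaussian disorder, (i) is an exact consequence of Gaussian integration by parts; for general disorder obeying (\ref{assump:mom}), the key is to replace it by the approximate integration by parts of Section~\ref{sec:AIBP}, which requires only two matching moments.

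\textbf{Step 1: approximate IBP for the internal energy.} Expand
$$\E\left\langle \frac{H_{\xi,p}(\spin^1)}{N}F\right\rangle=\frac{\beta_p}{N^{(p+1)/2}}\sum_{\mathbf i\in\{1,\ldots,N\}^p}\E\bigl[\xi_{\mathbf i}\langle \sigma_{\mathbf i}^{\,1}F\rangle_{H_{N,\xi}}\bigr].$$
Differentiating the Gibbs average in $\xi_{\mathbf i}$ introduces the multiplication operator $\D_{\mathbf i}$ from (\ref{def:Dij}) with prefactor $\beta_p N^{-(p-1)/2}$. A first-order approximate integration by parts (using $\E\xi_{\mathbf i}=0$ and $\E\xi_{\mathbf i}^{\,2}=1$) yields, up to an error $\epsilon_N(F)$,
$$\E\left\langle \frac{H_{\xi,p}(\spin^1)}{N}F\right\rangle=\beta_p^{\,2}\,\E\Bigl\langle\bigl(1+\sum_{\ell=2}^n R_{1,\ell}^p-nR_{1,n+1}^p\bigr)F\Bigr\rangle+\epsilon_N(F),$$
after collecting $\sum_{\mathbf i}\sigma^\ell_{\mathbf i}\sigma^1_{\mathbf i}=(NR_{1,\ell})^p$ for $\ell\neq 1$ (and $N^p$ for $\ell=1$). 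Each additional differentiation in the Taylor expansion that produces $\epsilon_N(F)$ contributes a factor $O(\beta_p N^{-(p-1)/2})$, which combined with the $N^p$ summation indices and the boundedness $|F|\le 1$ forces $\sup_{|F|\le 1}|\epsilon_N(F)|\to 0$.

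\textbf{Step 2: self-averaging and conclusion.} The $L^2$-concentration of the free-energy density $N^{-1}\log Z(H_{N,\xi})$ under (\ref{assump:mom}) (known from \cite{CH_06,C_05}), together with convexity in $\beta_p$, implies, after Lebesgue-averaging over an arbitrarily small interval of $\beta_p$ if necessary, that
$$\E\left[\left\langle \frac{H_{\xi,p}(\spin^1)}{N}\right\rangle_{H_{N,\xi}}^{\,2}\right]-\left(\E\left\langle \frac{H_{\xi,p}(\spin^1)}{N}\right\rangle_{H_{N,\xi}}\right)^{2}\to 0,$$
whence $\E\langle N^{-1}H_{\xi,p}F\rangle-\E\langle N^{-1}H_{\xi,p}\rangle\,\E\langle F\rangle\to 0$ uniformly in $|F|\le 1$. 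Applying Step~1 with the given $F$ and separately with $F\equiv 1$, $n=1$ (which gives $\E\langle N^{-1}H_{\xi,p}\rangle=\beta_p^{\,2}(1-\E\langle R_{1,2}^p\rangle)+o(1)$), subtracting the product $\E\langle N^{-1}H_{\xi,p}\rangle\E\langle F\rangle$ from both sides of the Step~1 identity, and dividing by $n\beta_p^{\,2}$ (admissible since $\beta_p\neq 0$), delivers precisely (\ref{GG:lim}).

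\textbf{Main difficulty.} The crux is the uniform smallness of $\epsilon_N(F)$ under the minimal moment assumption (\ref{assump:mom}). A naive Taylor bound needs a finite third moment, which is not postulated; the remedy, deferred to Section~\ref{sec:AIBP}, is a Lindeberg-type truncation of $\xi_{\mathbf i}$ in the spirit of \cite{C_05}: the truncated variables contribute an error controlled by the second moment alone, while the tails contribute $o(1)$ uniformly in $F$ as the truncation threshold is sent to infinity. Once Step~1 is established uniformly in bounded $F$, the remaining argument is structurally identical to the Gaussian case.
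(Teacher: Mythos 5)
Your proposal follows essentially the same two-step strategy as the paper: (i) an approximate integration by parts (the paper's Proposition~\ref{prop:AIBP}) to replace $\E\langle N^{-1}H_{\xi,p}(\spin^1)F\rangle$ by a combination of $p$-th powers of overlaps, valid under only two matching moments; and (ii) self-averaging of the internal energy density to kill the covariance with $F$. The algebraic unwind of Step~1 into (\ref{GG:lim}) (take $F=1$, $n=1$; subtract; divide by $n\beta_p^2$) is also the one the paper uses.

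There is, however, a real gap in your Step~2. You display only the vanishing of the \emph{disorder} variance
\[
\E\big[\langle N^{-1}H_{\xi,p}\rangle^2\big]-\big(\E\langle N^{-1}H_{\xi,p}\rangle\big)^2\to 0,
\]
and then assert ``whence'' that $\E\langle N^{-1}H_{\xi,p}F\rangle-\E\langle N^{-1}H_{\xi,p}\rangle\,\E\langle F\rangle\to 0$ uniformly in $|F|\le 1$. That implication does not hold from the disorder variance alone: decomposing
\[
\E\langle \tilde H F\rangle-\E\langle \tilde H\rangle\E\langle F\rangle
=\E\big\langle(\tilde H-\langle\tilde H\rangle)F\big\rangle
+\E\big[(\langle\tilde H\rangle-\E\langle\tilde H\rangle)\langle F\rangle\big],
\qquad \tilde H:=N^{-1}H_{\xi,p},
\]
only the second term is controlled by your display; the first, \emph{thermal} term requires $\E\langle(\tilde H-\langle\tilde H\rangle)^2\rangle\to 0$ as well. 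The paper in fact establishes the stronger $L^1$-statement $N^{-1}\E\langle|H_p-\E\langle H_p\rangle|\rangle\to 0$ (equation (\ref{Hlim})) by invoking \cite[Theorem~3.8]{Panchenko_2013}, which builds in exactly this thermal piece through the second-derivative (convexity) argument. Your convexity/Lebesgue-averaging remark points at the right mechanism --- the second derivative of the free energy controls thermal fluctuations --- but your written chain of implications skips it, so as stated the step is incomplete.

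One small point of comparison on Step~1: the paper's Proposition~\ref{prop:AIBP} does not truncate $\xi_{\mathbf i}$ itself; it keeps $\xi$ intact and instead places a $\min$-type cutoff inside the remainder $\error^2_{\xi}(\partial_x^2 f)$ (see (\ref{gamma2})--(\ref{IBP2-1}) and the estimate (\ref{diffF2})), which is what makes the argument run with only a finite second moment. Your phrase ``Lindeberg-type truncation of $\xi_{\mathbf i}$'' is in the same spirit but would need to be executed with care; the cutoff on the derivative is the cleaner route and is what the paper actually does.
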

\begin{proof}[\bf Proof.]
The proof uses the self-averaging of internal energy densities as in the case of the Gaussian disorder (cf. \cite[Theorem~3.8]{Panchenko_2013}), and now we need to validate the analogous property under the disorder $\xi$. We suppress subscripts `$N$' and `$\xi$' in Hamiltonians   
whenever the context is clear 
and write $\langle \,\cdot\,\rangle=\langle\, \cdot\,\rangle_{H_{N,\xi}}$ throughout this proof. We may assume  $\E\xi_{\mathbf i}^2=1$ by rescaling the temperature parameters. \\

\noindent \hypertarget{step1-1}{{\bf Step 1.}}
 We show that  for any function $F=F(\spin^1,\cdots,\spin^n):\Sigma^n_N\to [-1,1]$,
the difference
\begin{align}\label{diffF}
\E\left\langle \frac{H_p(\spin^1)}{N}F\right\rangle-\E\left\langle \frac{H_p}{N}\right\rangle \E\left\langle F\right\rangle
\end{align}
approximates 
\begin{align}\label{diffF02}
\beta_p^2 \left(\sum_{\ell=2}^n \E\langle R^p_{1,\ell}F\rangle-n\E\langle R_{1,n+1}^pF\rangle+\E\langle R^p_{1,2}\rangle \E\langle F\rangle \right)
\end{align}
in the limit $N\to\infty$. This difference is exactly zero under the Gaussian disorder by the Gaussian integration by parts.

We start with the following identities:
\begin{align}\label{Gibbs:der}
\partial_{x}^j\langle F\rangle=\frac{\beta_p^j}{N^{j(p-1)/2}}\langle \D_{\mathbf i}^jF\rangle_{\xi_\i=x},\quad j\in \Bbb N,
\end{align}
where we use the definition (\ref{def:Dij}) of $\D_{\mathbf i}$ and $\langle \,\cdot\, \rangle_{\xi_{\mathbf i}=x}$ is the Gibbs expectation defined by setting $\xi_{\mathbf i}$ in $\langle \,\cdot\,\rangle$ to be $x$.
Then with 
$\error^2_{\xi_{\mathbf i}}$ defined by (\ref{gamma2}) and the notation
$f_{\mathbf i}(x)\equiv \E\langle \sigma^1_{\mathbf i}F\rangle_{\xi_\i=x}$,
we obtain from Proposition~\ref{prop:AIBP} for (\ref{HIBP1}) below that
\begin{align}
\E\left\langle \frac{H_p(\spin^1)}{N}F\right\rangle
=&\frac{\beta_p}{N^{(p-1)/2+1}}\sum_{\mathbf i\in \{1,\cdots,N\}^p}\E\xi_{\mathbf i} f_{\mathbf i}(\xi_{\mathbf i})\notag\\
=&\,\frac{\beta_p}{N^{(p-1)/2+1}}\sum_{\mathbf i\in \{1,\cdots,N\}^p}\E\partial_x f_{\mathbf i}(\xi_{\mathbf i})+\frac{\beta_p}{N^{(p-1)/2+1}}\sum_{\mathbf i\in \{1,\cdots,N\}^p}\error^2_{\xi_{\mathbf i}}\big(\partial^2_xf_{\mathbf i}\big)\label{HIBP1}\\
=&\,\beta_p^2\E\left\langle \left(\sum_{\ell=1}^n R^p_{1,\ell}-nR^p_{1,n+1}\right)F\right\rangle+\frac{\beta_p}{N^{(p-1)/2+1}}\sum_{\mathbf i\in \{1,\cdots,N\}^p}\error^2_{\xi_{\mathbf i}}\big(\partial_x^2 f_{\mathbf i}\big),\label{HIBP2}
\end{align}
where the last equality follows from (\ref{Gibbs:der}) with $j=1$ and the definition (\ref{def:Dij}) of $\D_{\mathbf i}$. 
For the last term in (\ref{HIBP2}), we use the explicit form (\ref{gamma2}) of $\error^2_{\xi_{\mathbf i} }$ and (\ref{IBP2-1})  to get
\begin{align}
\begin{split}
&\varlimsup_{N\to\infty}\sup_{F:\Sigma^n_N\to [-1,1]}\left|\frac{\beta_p}{N^{(p-1)/2+1}}\sum_{\mathbf i\in \{1,\cdots,N\}^p}\error^2_{\xi_{\mathbf i}}\big(\partial_x^2f_{\mathbf i}\big)\right|\\
\leq &\lim_{N\to\infty}\frac{|\beta_p|}{N^{(p-1)/2+1}}\sum_{\mathbf i\in \{1,\cdots,N\}^p}\left(\E \left[|\xi_{\mathbf i}|\int_{0}^{|\xi_{\mathbf i}|}\min\big\{2\|\partial_x f_{\mathbf i}\|_\infty,\|\partial_x^2f_{\mathbf i}\|_\infty x\big\}\d x \right]
+\frac{\E\xi_{\mathbf i}^2 \|\partial_x^3f_{\mathbf i}\|_\infty}{2}\right)=0.
\label{diffF2}
\end{split}
\end{align} 
Here, the last equality follows from (\ref{Gibbs:der}) and the assumption that the coupling constants are identically distributed and satisfy $\E \xi^2_\i<\infty$ so that for any $\vep>0$, all the summands can be bounded by $\vep/N^{(p-1)/2}$ whenever $N\geq N_0$ for some $N_0$ depending on $n$ and $\beta_p$.  
The foregoing equality and (\ref{HIBP2}) imply that
\begin{align}
\lim_{N\to\infty}\sup_{F:\Sigma^n_N\to [-1,1]}\left|\E\left\langle \frac{H_p(\spin^1)}{N}F\right\rangle
-\beta_p^2\E\left\langle \left(\sum_{\ell=1}^n R^p_{1,\ell}-nR^p_{1,n+1}\right)F\right\rangle\right|=0.\label{diffF3}
\end{align}
The required approximation of (\ref{diffF}) by (\ref{diffF02}) under the same mode of uniform convergence in (\ref{diffF3}) then follows. We remark that if $\E|\xi_{\mathbf i}|^3<\infty$, (\ref{Gibbs:der}) and (\ref{diffF2}) show that the rate of convergence for (\ref{diffF3}) is of the order $\mathcal O(1/N^{(p-1)/2})$.\\

\noindent \hypertarget{step1-2}{{\bf Step 2.}}
To obtain (\ref{GG:lim}), it remains to show that the differences in (\ref{diffF}) tend to zero uniformly in $F:\Sigma^n_{N}\to [-1,1]$, and so a proof of the following equality suffices:
\begin{align}\label{Hlim}
\lim_{N\to \infty}\frac{1}{N}\E\langle|H_p-\E\langle H_p\rangle | \rangle=0.
\end{align}
In the rest of this proof, $Z_N(x)$ denotes the partition function defined by $H_{N,\xi}$ with $\beta_p=x$. 

To get (\ref{Hlim}),
we need the following three properties of $\lim_{N\to\infty}N^{-1}\log Z_N(x)$.
First, we recall that under the moment assumption (\ref{assump:mom}), the limit in probability of the free-energy densities $N^{-1}\log Z_N(x)$ is the same as the limit of the expectations of their counterparts under the Gaussian disorder (\cite[Lemma~8]{CH_06} and \cite[Section~3]{C_05}), which is given by the Parisi formula \cite[Theorem~3.1]{Panchenko_2013}.
Second, a standard martingale difference argument by Burkholder's inequality \cite[Theorem~3.2]{Burkholder_1973} as in the proof of \cite[Lemma~8]{CH_06} 
shows that for each $x\in \R$, 
\begin{align}
\begin{split}\label{Burk}
\E\left[\left(\frac{1}{N}\log Z_N(x)-\frac{1}{N}\E\log Z_N(x)\right)^2\right]\leq &\sum_{q=2,q\neq p}^\infty \frac{\beta_q^2}{N^{q+1}}\sum_{\mathbf i\in \{1,\cdots,N\}^q}\E\big[\big(|\xi_{\mathbf i}|+\E|\xi_{\mathbf i}|\big)^2\big]\\
&+\frac{x^2}{N^{p+1}}\sum_{\mathbf i\in \{1,\cdots,N\}^p}\E\big[\big(|\xi_{\mathbf i}|+\E|\xi_{\mathbf i}|\big)^2\big].
\end{split}
\end{align}
For fixed $x\in \R$, the foregoing inequality and the convergence of $N^{-1}\E\log Z_N(x)$ imply that the sequence $\{N^{-1}\log Z_N(x)\}$  
converges in $L_2(\P)$ to the Parisi formula. Third, the limiting free-energy density is everywhere differentiable in $x$ since the corresponding Parisi formula is \cite[Theorem~3.7]{Panchenko_2013}.
These three properties are enough to apply \cite[Theorem~3.8]{Panchenko_2013} to validate (\ref{Hlim}). The proof of Theorem~\ref{thm:1} is complete.
\end{proof}

The following corollary will be used in Section~\ref{sec:convtv}.

\begin{cor}\label{cor:1}
Under the conditions of Theorem~\ref{thm:1} for fixed $p\geq 2$, it holds that
\begin{align}
\begin{split}\label{eq:D2}
&\lim_{N\to\infty}\sup_{F:\Sigma^n_N\to [-1,1]}\Bigg|2\sum_{1\leq \ell<\ell'\leq n}\E\langle R^p_{\ell,\ell'}F\rangle_{H_{N,\xi}}-2n\sum_{\ell=1}^n\E\langle R^{p}_{\ell,n+1}F\rangle_{H_{N,\xi}} \\
&\hspace{6cm}+n(n+1)\E\langle R^{p}_{n+1,n+2}F\rangle_{H_{N,\xi}}\Bigg|=0,\quad \forall\;n\in \Bbb N.
\end{split}
\end{align}
\end{cor}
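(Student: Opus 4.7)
The plan is to derive (\ref{eq:D2}) by applying Theorem~\ref{thm:1} twice and taking a linear combination, with the exchangeability of the replicas under $\mu(\,\cdot\,)\doteq \E\langle\, \cdot\,\rangle_{H_{N,\xi}}$ as the essential structural ingredient.

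I would first upgrade Theorem~\ref{thm:1} to its replica-symmetric form: for every $k\in\{1,\ldots,n\}$,
\[
\mu(R^p_{k,n+1}F) = \frac{1}{n}\mu(R^p_{1,2})\mu(F) + \frac{1}{n}\sum_{\substack{\ell=1\\ \ell\neq k}}^n \mu(R^p_{k,\ell}F) + o(1),
\]
uniformly in $F:\Sigma_N^n\to[-1,1]$, where $o(1)\to 0$ as $N\to\infty$. This reduces to Theorem~\ref{thm:1} after relabeling by the transposition $\tau_k$ of replica indices $1$ and $k$; the sup-norm of $F\circ\tau_k$ equals that of $F$, so the uniformity is preserved.

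Next, I would combine two uses of this symmetrized identity. Summing over $k=1,\ldots,n$ and multiplying by $n$ gives
\[
n\sum_{k=1}^n \mu(R^p_{k,n+1}F) = n\mu(R^p_{1,2})\mu(F) + 2\sum_{1\le\ell<\ell'\le n}\mu(R^p_{\ell,\ell'}F) + o(1),\qquad(\ast)
\]
using $\sum_{k=1}^n\sum_{\ell\neq k}\mu(R^p_{k,\ell}F) = 2\sum_{\ell<\ell'}\mu(R^p_{\ell,\ell'}F)$ by symmetry of the overlap. Independently, I would apply the symmetrized identity with $n$ replaced by $n+1$, taking $k=n+1$ and viewing $F$ as a bounded function on $\Sigma_N^{n+1}$ that does not depend on $\spin^{n+1}$:
\[
(n+1)\mu(R^p_{n+1,n+2}F) = \mu(R^p_{1,2})\mu(F) + \sum_{\ell=1}^n \mu(R^p_{\ell,n+1}F) + o(1).
\]
Multiplying this by $n$ and subtracting $(\ast)$ cancels the common $n\mu(R^p_{1,2})\mu(F)$ term and leaves exactly the linear combination inside the supremum in (\ref{eq:D2}), with an error that is $o(1)$ uniformly in $F$.

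There is no serious obstacle: the argument is purely a combinatorial rearrangement of Theorem~\ref{thm:1}. The only point to verify is that both invocations of the theorem---one with $F$ replaced by $F\circ\tau_k$, and one with $n$ replaced by $n+1$ and $F$ trivially extended to $\Sigma_N^{n+1}$---stay within the class of functions bounded by $1$ over which Theorem~\ref{thm:1} is uniform. Neither operation increases $\|F\|_\infty$, so this is automatic.
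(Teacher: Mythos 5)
Your argument is correct and reaches the same conclusion as the paper, but the organization is slightly different. The paper's proof introduces the auxiliary test functions $\Delta_\ell F(\spin^1,\cdots,\spin^{n+1}) = F(\spin^1,\cdots,\spin^n)-F(\spin^1,\cdots,\spin^{\ell-1},\spin^{\ell+1},\cdots,\spin^{n+1})$, which satisfy $\E\langle\Delta_\ell F\rangle = 0$ by exchangeability; applying Theorem~\ref{thm:1} (with $n$ replaced by $n+1$) to $\Delta_\ell F$ then kills the term $\E\langle R^p_{1,2}\rangle\E\langle\cdot\rangle$ identically, and expanding and summing over $\ell$ gives (\ref{eq:D2}) directly. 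You instead keep the $\frac{1}{n}\mu(R^p_{1,2})\mu(F)$ term in both applications of the (symmetrized) identity and observe that it cancels upon subtraction. The two manipulations are combinatorially equivalent, and both handle the uniformity in $F$ correctly by noting that the relabelings and trivial extension to $\Sigma^{n+1}_N$ do not increase the sup norm.

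The one thing your route does not expose is a point the paper records immediately after the corollary: because $\E\langle\Delta_\ell F\rangle = 0$, the paper's proof shows that (\ref{eq:D2}) depends only on the \emph{thermal} self-averaging statement (\ref{internal-diff}), i.e., on Step~\hyperlink{step1-1}{1} of the proof of Theorem~\ref{thm:1}, and does not require the disorder self-averaging from Step~\hyperlink{step1-2}{2}. Your version invokes the full force of Theorem~\ref{thm:1} twice, and the cancellation of $n\mu(R^p_{1,2})\mu(F)$ occurs only in the final arithmetic rather than structurally at the level of test functions. This is immaterial for proving Corollary~\ref{cor:1} as stated, but it loses the refinement that the paper explicitly wishes to highlight. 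Otherwise the proof is complete and correct.
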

\begin{proof}[\bf Proof.]
The proof is identical to some argument in the proof of \cite[Theorem~2]{CG_stability}. We give the details for the convenience of the reader.  For $1\leq \ell\leq n$, set
\begin{align}\label{def:DeltaF}
\Delta_\ell F(\spin^1,\cdots,\spin^n,\spin^{n+1})\doteq 
F(\spin^1,\cdots,\spin^n)-F(\spin^1,\cdots,\spin^{\ell-1},\spin^{\ell+1},\cdots,\spin^{n+1}) ,
\end{align}
and so by (\ref{GG:lim}) the following tends to zero uniformly in $F:\Sigma^n_N\to [-1,1]$: 
\begin{align}
&(n+1)\E\langle R^p_{\ell,n+2}\Delta_\ell F\rangle_{H_{N,\xi}}-\sum_{\stackrel{\scriptstyle 1\leq \ell'\leq n+1}{\ell'\neq \ell}}\E\langle R^p_{\ell,\ell'}\Delta_\ell F\rangle_{H_{N,\xi}}\notag\\
=&n\E\langle R^p_{\ell,n+1}F\rangle_{H_{N,\xi}}-(n+1)\E\langle R^p_{n+1,n+2}F\rangle_{H_{N,\xi}}-\sum_{\stackrel{\scriptstyle 1\leq \ell'\leq n}{\ell'\neq \ell}}\E\langle R^p_{\ell,\ell'}F\rangle+\sum_{\ell'=1}^n\E\langle R^p_{\ell',n+1}F\rangle_{H_{N,\xi}}.\label{eq:D2-2}
\end{align} 
We deduce (\ref{eq:D2}) upon
summing the right-hand sides of (\ref{eq:D2-2}) over $\ell\in \{1,\cdots,n\}$.  
\end{proof}

By the use of the functions $\Delta_\ell F$, (\ref{eq:D2}) depends only 
the thermal part of the Ghirlanda--Guerra identities. Indeed, an inspection 
of Step~\hyperlink{step1-1}{1} in the proof of Theorem~\ref{thm:1} shows that (\ref{eq:D2})
only needs the self-averaging of the thermal fluctuations of Gibbs measures as follows:
\begin{align}\label{internal-diff}
\lim_{N\to\infty}\sup_{F:\Sigma^n_N\to [-1,1]}\left|\E\left\langle \frac{H_p(\spin^1)}{N}F\right\rangle_{H_{N,\xi}}-\E\left\langle \frac{H_p}{N}\right\rangle_{H_{N,\xi}} \langle F\rangle_{H_{N,\xi}}\right|=0.
\end{align}

\section{Weak convergence of spin distributions}\label{sec:UMO}
Our main goal in this section is to characterize spin distributions in the mixed $p$-spin models by the universality of the Ghirlanda--Guerra identities in Theorem~\ref{thm:1}. Weak convergence of distributions is considered.

At first, we consider the case of generic $p$-spin models. In these models, the temperature parameters $(\beta_p)_{p\geq 2}$ satisfy the following Stone-Weierstrass-type condition:
\begin{align}\label{def:generic}
\mbox{${\rm span}\{x\mapsto x^p;\beta_p\neq 0\}$ is dense in $\C([-1,1])$}.
\end{align}
Here, $\C([-1,1])$ is the space of real-valued continuous functions on $[-1,1]$ equipped with the supremum norm. 

\begin{thm}
\label{thm:2}
Suppose that (\ref{assump:mom}) holds with $\E\xi_{\mathbf i}^2=1$ and a sequence of temperature parameters satisfying (\ref{def:generic}) is in force. Let $\Pi_\xi$ be the weak limit  of the sequence of spin distributions
\[
\big((\sigma^\ell_i)_{\ell,i\in \Bbb N}, \E\langle\, \cdot\,\rangle_{H_{N,\xi}}\big)_{N\geq 1}, 
\]
where $\sigma^\ell_i=0$ for all $i>N$ under $\E\langle\, \cdot\,\rangle_{H_{N,\xi}}$.
Then 
$\Pi_\xi=\Pi_g$, where $g$ denotes the standard Gaussian disorder. 
\end{thm}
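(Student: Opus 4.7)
My strategy is to identify $\Pi_\xi$ with $\Pi_g$ by routing through the law of the overlap array $(R_{\ell,\ell'})_{\ell\neq \ell'}$. Each spin takes values in $\{-1,0,+1\}$ and each overlap in $[-1,1]$, so along a subsequence I may assume both the spin law and the joint law $\mu_\xi$ of the overlap array converge weakly; since any subsequential limit of the spin law coincides with $\Pi_\xi$ by hypothesis, it suffices to show that $\mu_\xi=\mu_g$ along the subsequence and that equality of the overlap laws forces equality of the spin laws. The plan therefore splits into two tasks: (A) identify the limiting overlap-array law; (B) adapt Panchenko's cavity argument \cite{Panchenko_2013_spin} so that the passage from overlap law to spin law survives a change of disorder.

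For (A), I would combine universality of the free-energy density (\cite{CH_06,C_05}) with differentiability of the Parisi functional in each $\beta_p$ (the third property recalled in Step~2 of the proof of Theorem~\ref{thm:1}). Since $\beta_p\mapsto N^{-1}\E\log Z(H_{N,\xi})$ is convex, and its limit is differentiable and independent of the disorder, the derivatives also converge to a common limit; evaluating this derivative by means of the identity underlying Step~\hyperlink{step1-1}{1} of the proof of Theorem~\ref{thm:1} yields, for every $p$ with $\beta_p\neq 0$,
\[
\lim_{N\to\infty}\E\langle R_{1,2}^p\rangle_{H_{N,\xi}}=\lim_{N\to\infty}\E\langle R_{1,2}^p\rangle_{H_{N,g}}.
\]
The generic condition (\ref{def:generic}) then pins down the full one-overlap marginal of $\mu_\xi$, and Theorem~\ref{thm:1} delivers the Ghirlanda--Guerra identities in the limit. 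A standard recursion available under the generic condition (see \cite{P_13}) shows that the one-overlap marginal together with the Ghirlanda--Guerra identities uniquely determines the joint law of the overlap array, so $\mu_\xi=\mu_g$.

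For (B), I would transport Panchenko's cavity derivation in \cite{Panchenko_2013_spin}, which in the Gaussian case expresses a finite-dimensional spin marginal $\E\langle F(\sigma^\ell_i)\rangle$ at a block of cavity sites as an explicit continuous functional of the overlap-array law, the identification proceeding by Gaussian integration by parts in the cavity couplings. Under general disorder I would replace the Gaussian integration by parts by its extension in Section~\ref{sec:AIBP}. The extra error terms have the same shape as the one appearing in the bound (\ref{diffF2}) of Step~\hyperlink{step1-1}{1} in the proof of Theorem~\ref{thm:1}, and are controlled by Lindeberg-type truncations together with the factor $N^{-(p-1)/2}$ attached to each cavity coupling. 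What survives in the limit is exactly the Gaussian functional of the overlap-array law evaluated at $\mu_\xi$, which by (A) equals its value at $\mu_g$, giving $\Pi_\xi=\Pi_g$.

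The main obstacle is part (B). Although the integration-by-parts bound in Theorem~\ref{thm:1} is uniform in $F:\Sigma_N^n\to[-1,1]$ for \emph{fixed} overlap exponents, the cavity computation differentiates in a growing family of cavity couplings simultaneously and then sums over them, so one has to verify that the quantitative $\mathcal O(N^{-(p-1)/2})$ rate noted at the end of Step~\hyperlink{step1-1}{1} of the proof of Theorem~\ref{thm:1} is strong enough to absorb the cumulative error when all these derivatives are accounted for, and that this can be done using only the two-moment assumption (\ref{assump:mom}). Turning these cumulative bounds into an \emph{approximate} cavity equation whose limit really is the Gaussian functional is the technical heart of the theorem.
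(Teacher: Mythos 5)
Your two-task architecture (identify the limiting overlap-array law, then use a cavity argument to convert overlap universality into spin universality) is the same as the paper's, and your part (A) matches the paper's Step~\hyperlink{step2-1}{1} essentially verbatim: universality and differentiability of the Parisi formula give matching $p$-th annealed moments of $R_{1,2}$, and the ``standard recursion'' you invoke is precisely the Baffioni--Rosati theorem that the paper cites to pin down the joint overlap law from the one-overlap marginal and the Ghirlanda--Guerra identities.

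Part (B), however, is not what the paper does, and the obstacle is larger than a cumulative-error bookkeeping issue. You propose to redo Panchenko's cavity derivation under $\xi$-disorder by substituting the approximate integration by parts for the exact Gaussian one, but Panchenko's argument in \cite{Panchenko_2013_spin} is not merely a chain of IBP identities: the crucial ingredient behind the approximation of spin marginals by overlap functionals (the analogue of (\ref{SW:approx})) is that the cavity fields $h_{N,j,g}(\spin)$ are \emph{conditionally Gaussian given the bulk}, so that their joint conditional law is determined by the overlap array alone. An approximate IBP in the individual coupling constants of $h_{N,j,\xi}$ does not by itself deliver this conditional Gaussianity, so the passage from cavity equation to overlap functional would not go through directly. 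The paper's resolution is different and cleaner: decompose $H_{N+n',\xi}$ into bulk $H'_{N,\xi}$, a \emph{fixed} number $n'$ of cavity fields, and a negligible remainder $r_{N,\xi}$; then interpolate only the cavity fields from $h_{N,j,\xi}$ to independent Gaussian $h_{N,j,g}$ (Step~\hyperlink{step2-3}{3}), leaving the bulk disorder $\xi$ alone. Since the two families of cavity fields have matching covariance under the two-moment hypothesis, the interpolation derivative reduces to remainder terms controlled by a single application of Proposition~\ref{prop:AIBP}, with no cumulative-error problem because $n'$ is fixed. After that, Panchenko's cavity equation (\ref{eq:cavityg}) and its approximation (\ref{SW:approx}) apply \emph{verbatim} to the genuinely Gaussian cavity fields, and Step~\hyperlink{step2-5}{5} closes the argument using part (A). Your version would require re-deriving Panchenko's conditional-Gaussianity machinery under general disorder, which is harder and not obviously achievable with only two matching moments.
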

\begin{proof}[\bf Proof.]
We show universality of joint distributions of the overlap arrays $(R_{\ell,\ell'})_{\ell\neq \ell'}$ in Step~\hyperlink{step2-1}{1}, and then prove in the rest of the proof that this particular universality implies the required universality of spin distributions.\\

\noindent \hypertarget{step2-1}{{\bf Step 1.}}
The limiting free energy density under $\xi$ is given by the Parisi formula by its disorder universality \cite{CH_06,C_05}, 
Also, by the explicit formulas for partial derivatives of the Parisi formula with respect to temperature parameters \cite[Theorem~3.7]{Panchenko_2013}, the $p$-th annealed moments of $R_{1,2}$ under $\xi$ and $g$ coincide in the limit  whenever $\beta_p\neq 0$. Hence, the annealed distributions of $R_{1,2}$ under the two disorders in the limit are equal by (\ref{def:generic}). By the  Baffioni-Rosati theorem (cf. \cite[Theorem~2.13]{Panchenko_2013} or \cite[Section~15.3]{Talagrand_v2}) applied to any weak subsequential limit of the sequence of annealed distributions of $(R_{\ell,\ell'})_{\ell\neq \ell'}$ under $\xi$, we obtain the weak convergence of this sequence and the limit $\Pi_\xi\big((R_{\ell,\ell'})_{\ell\neq \ell'}\in \,\cdot\,\big)$ is given by $\Pi_g\big((R_{\ell,\ell'})_{\ell\neq \ell'}\in \,\cdot\,\big)$.\\

\noindent \hypertarget{step2-2}{{\bf Step 2.}} We set up some notations for applications of the cavity method. Fix $n'\geq 1$. We apply the cavity method to the system over $N+n'$ sites where the first $n'$ coordinates play the role of cavity coordinates, and write $\sspin=(\spinn,\spin)\in \Sigma_{n'}\times \Sigma_N$. We decompose the Hamiltonian $H_{N+n'}(\sspin)$ defined by 
(\ref{def:H}) according to the number of occurrences of sites $1,\cdots,n'$ in $\mathbf i\in \{1,\cdots,N+n'\}^p$ and get
\begin{align}\label{HN:decg}
H_{N+n',\xi}(\sspin)\stackrel{\rm (d)}{=}H'_{N,\xi}(\spin)+\sum_{j=1}^{n'}\vep_j h_{N,j,\xi}(\spin)+r_{N,\xi}(\spinn,\spin)+h\sum_{i=1}^{{N+n'}}\rho_i,
\end{align}
for $\sspin=(\spinn,\spin)\in \Sigma_{n'}\times \Sigma_N$,
where the $q$-th term on the right-hand side, for $q=1,2$, has $q-1$ numbers of occurrences of spins in the first $n'$ coordinates of $\sspin$, and the third term has $\geq 2$ numbers of occurrences of these spins.
 In detail, we have 
\begin{align}
H'_{N,\xi}(\spin)=&\sum_{p\geq 2}\frac{\beta_p}{(N+n')^{(p-1)/2}}\sum_{\mathbf k\in \{1,\cdots,N\}^p}\xi_{\mathbf k}\sigma_{\mathbf k},\label{HN:dec1H'Ng}\\
h_{N,j,\xi}(\spin)=&\sum_{p\geq 2}\frac{\beta_p}{(N+n')^{(p-1)/2}}\sum_{a=1}^{{p\choose 1}} \sum_{\mathbf k\in \{1,\cdots,N\}^{p-1}}\xi_{j,a,\mathbf k}\sigma_{\mathbf k},\quad 1\leq j\leq n',\label{HN:dec1hNg}\\
\begin{split}
r_{N,\xi}(\sspin)=&r_{N,\xi}(\spinn,\spin)
=\sum_{p\geq 2}\frac{\beta_p}{(N+n')^{(p-1)/2}}\sum_{\mathbf i\in \ms I(p)}\xi_{\mathbf i}\rho_{\mathbf i},\label{HN:dec1rNg}
\end{split}
\end{align}
where the couplings constants on the right-hand sides are i.i.d. with the same laws as $\xi_{\mathbf i}$ and $\ms I(p)\subset \{1,\cdots,N+n'\}^p$ satisfies
$|\ms I(p)|=\sum_{\ell=2}^p{p\choose \ell}(n')^\ell N^{p-\ell}$.

For the following argument,
we also introduce an independent set of standard Gaussian coupling constants and define analogues of $h_{N,j,\xi}$ as
\begin{align}
h_{N,j,g}(\spin)=&\sum_{p\geq 2}\frac{\beta_p}{(N+n')^{(p-1)/2}}\sum_{a=1}^{{p\choose 1}} \sum_{\mathbf k\in \{1,\cdots,N\}^{p-1}}g_{j,a,\mathbf k}\sigma_{\mathbf k},\quad 1\leq j\leq n'.\label{HN:dec1hN}
\end{align}

\noindent \hypertarget{step2-3}{{\bf Step 3.}} 
In this step, we show that the Gibbs measure defined by $H_{N+n',\xi}$ is not changed in the limit of $N\to\infty$ if we replace $h_{N,j,\xi}$ by $h_{N,j,g}$ and remove $r_{N,\xi}$.  
Specifically, setting 
\begin{align}\label{HN:decg2}
H^t_{N+n',\xi,g}(\sspin)=H'_{N,\xi}(\spin)+\sum_{j=1}^{n'}\vep_j\big(\sqrt{t}h_{N,j,\xi}(\spin)+\sqrt{1-t}h_{N,j,g}(\spin)\big)+t r_{N,\xi}(\sspin)+h\sum_{i=1}^{{N+n'}}\rho_i
\end{align}
for $0\leq t\leq 1$ so that $H^1_{N+n',\xi,g}=H_{N+n',\xi}$, we show that
\begin{align}\label{eq:decg3}
\lim_{N\to\infty}\sup_{F:\Sigma^n_N\to [-1,1]}\big|\E\langle F\rangle_{1}-\E\langle F\rangle_{0}\big|=0,\quad \forall\;n\in \Bbb N,
\end{align}
where $\langle \,\cdot\,\rangle_t$ denotes the Gibbs expectation under $H^t_{N+n',\xi,g}$. 

For any $F=F(\sspin^1,\cdots,\sspin^n):\Sigma^{n}_{N+n'}\to [-1,1]$, we have
\begin{align}
&\E\langle F\rangle_1-\E\langle F\rangle_0=\int_0^1 \E\frac{\d}{\d t}\langle F\rangle_t \d t\notag\\
\begin{split}
=&\int_0^1 \frac{1}{2\sqrt{t}}\sum_{j=1}^{n'}\E\left\langle \left(\sum_{\ell=1}^{n}\vep^\ell_jh_{N,j,\xi}(\spin^\ell)-n\vep^{n+1}_jh_{N,j,\xi}(\spin^{n+1})\right)F \right\rangle_t\d t\\
&-\int_0^1 \frac{1}{2\sqrt{1-t}}\sum_{j=1}^{n'}\E\left\langle \left(\sum_{\ell=1}^n\vep^\ell_jh_{N,j,g}(\spin^\ell)-n\vep^{n+1}_jh_{N,j,g}(\spin^{n+1})\right)F \right\rangle_t\d t\\
&+\int_0^1 \E\left\langle \left(\sum_{\ell=1}^{n}r_{N,\xi}(\sspin^\ell)-nr_{N,\xi}(\sspin^{n+1})\right)F \right\rangle_t\d t.
\label{hxig-diff}
\end{split}
\end{align}
Notice that the two processes $h_{N,j,\xi}$ and $h_{N,j,g}$ have the same covariance function by the assumption of two matching moments of $\xi$. Hence, by Proposition~\ref{prop:AIBP} and the Gaussian integration by parts, the sum of the first and second integrals in (\ref{hxig-diff}) can be bounded by $n'\cdot n\cdot \sup_{j,\ell,t}|\mathcal E_{j,\ell}(t)|$. Here, $\mathcal E_{j,\ell}(t)$ is the remainder term from the application of (\ref{IBP2}) to $\E\big\langle 
\vep^\ell_j h_{N,j,\xi}(\spin^\ell)F \big\rangle_t
$. By (\ref{Gibbs:der}),
\begin{align*}
|\mathcal E_{j,\ell}(t)|\leq C\sum_{p\geq 2}\sum_{a=1}^{{p\choose 1}}\sum_{\mathbf k\in \{1,\cdots,N\}^{p-1}}\frac{\beta_p^2}{N^{p-1}}\left(\E|\xi_{j,a,\mathbf k}|\int_{0}^{|\xi_{j,a,\mathbf k}|}\min\left\{1,\frac{|\beta_p |x}{N^{(p-1)/2}}\right\}\d x+\frac{\beta_p^2}{N^{p-1}}\E\xi_{j,a,\mathbf k}^2\right)
\end{align*}
for some constant $C$ depending only on $n$, and the right-hand side tends to zero. (Recall the explanation below (\ref{diffF2}).) The last integral of (\ref{hxig-diff}) can be handled similarly. It tends to zero upon applying (\ref{IBP2}) to each $\E\langle r_{N,\xi}(\sspin^\ell)F\rangle_t$.
Indeed, for some constant $C'$ depending only on $(\beta_p)$ and $n$, we have
\[
\sup_{p\geq 2,N\geq 1}|\ms I(p)| N^{-(p-2)}<\infty\Longrightarrow \sup_{\sspin^1,\sspin^2\in \Sigma_{N+n'}}\big|\E[r_{N,\xi}(\sspin^1)r_{N,\xi}(\sspin^2)]\big|\leq \frac{C'}{N}, 
\]
and the remainder term $\mathcal E_\ell(t)$ from the application of (\ref{IBP2}) to $\E\langle r_{N,\xi}(\sspin^\ell)F\rangle_t$ satisfies
\begin{align*}
|\mathcal E_\ell(t)|\leq C'\sum_{p\geq 2}
\sum_{\mathbf i\in \ms I(p)}
\frac{\beta_p^2}{N^{p-1}}\left(\E|\xi_{\mathbf i}|\int_{0}^{|\xi_{\mathbf i}|}\min\left\{1,\frac{|\beta_p |x}{N^{(p-1)/2}}\right\}dx+\frac{|\beta_p|^2}{N^{p-1}}\E\xi_{\mathbf i}^2\right)\xrightarrow[N\to\infty]{} 0.
\end{align*}
Apply the last three displays to (\ref{hxig-diff}), and then (\ref{eq:decg3}) follows. \\

\noindent \hypertarget{step2-4}{{\bf Step 4.}} 
From this step on, we draw connections between universality of joint distributions of the overlap arrays $(R_{\ell,\ell'})_{\ell\neq \ell'}$ and the required universality of spin distributions.

By Step~\hyperlink{step2-3}{3},
the argument in \cite[Section~3]{Panchenko_2013_spin} now applies and gives the following results. First, for any $n,n'\geq 1$ and $C_1,\cdots,C_n\subseteq \{1,\cdots,n'\}$, the following identity holds:
\begin{align}\label{eq:cavityg}
\E\prod_{\ell\leq n}\left\langle \prod_{j\in C_\ell}\vep_j\right\rangle_{H^0_{N+n',\xi,g}} =\E\prod_{\ell\leq n}\frac{\big\langle \prod_{j\in C_\ell}\tanh\big(h_{N,j,g}(\spin)+h\big)\prod_{j=1}^{n'} \cosh\big(h_{N,j,g}(\spin)+h\big)\big\rangle_{H'_{N,\xi}}}{\big\langle \prod_{j=1}^{n'}\cosh\big(h_{N,j,g}(\spin)+h\big)\big\rangle_{H'_{N,\xi}}}.
\end{align}
The key feature of the right-hand side of (\ref{eq:cavityg}) is the presence of the Gaussian Hamiltonians $h_{N,j,g}$. 
Second, the key observation in \cite[Section~3]{Panchenko_2013_spin} applies to the present case and shows
that, through (\ref{eq:cavityg}), for every $\vep>0$, we can find a positive integer $M_\vep$ and a bounded continuous function $\Phi_\vep$, both independent of $N$ and the disorder $\xi$, such that
\begin{align}\label{SW:approx}
&\Bigg| \E\prod_{\ell\leq n}\left\langle \prod_{j\in C_\ell}\vep_j\right\rangle_{H^0_{N+n',\xi,g}}-\E\left\langle \Phi_\vep\left(\frac{N}{N+n'}R_{\ell,\ell'};1\leq \ell\neq \ell'\leq M_\vep\right)\right\rangle_{H'_{N,\xi}}\Bigg|\leq \vep.
\end{align}

\noindent \hypertarget{step2-5}{{\bf Step 5.}} We complete the proof of Theorem~\ref{thm:2} in this step. First, notice that
the limiting distribution of the overlap arrays (\ref{def:overlap}) under $\E\langle\, \cdot\,\rangle_{H'_{N,\xi}}$ is given by the probability distribution $\Pi_\xi$ defined in Step~\hyperlink{Step 2-1}{1}. In more detail, we can work with free-energy densities under the following interpolating Hamiltonians: 
\begin{align}\label{HN:decg4}
\widetilde{H}_{N+n',\xi,g}^t(\sspin)=\sqrt{1-t}H'_{N,\xi}(\spin)+\sqrt{t}H_{N,\xi}(\spin)+\sum_{j=1}^{n'}\vep_j h_{N,j,g}(\spin),\quad 0\leq t\leq 1,
\end{align}
to tune the normalizing factor $1/(N+n')^{(p-1)/2}$ in $H'_{N,\xi}$ to $1/N^{(p-1)/2}$,
where $H_{N,\xi}$ is an independent copy of the Hamiltonian of the generic $p$-spin model over $N$ sites. Then it can be shown that the rates of change of the free-energy densities associated with $\widetilde{H}_{N+n',\xi,g}^t$ vanish
by Proposition~\ref{prop:AIBP} and the fact that, for $p\geq 2$, 
\begin{align}\label{beta+g}
\left|\frac{N^{p-1}}{(N+n')^{p-1}}\beta_p^2-\beta_p^2\right|=\mathcal O\left(\frac{1}{N}\right).
\end{align}
We omit the details since they are very similar to the arguments in Step~\hyperlink{step2-1}{1} and Step~\hyperlink{step2-3}{3}.  

Now recall that by Step~\hyperlink{Step 2-1}{1}, the laws of $(R_{\ell,\ell'})_{\ell\neq \ell'}$ under $\Pi_\xi$ and $\Pi_g$ are the same. 
Hence by (\ref{eq:decg3}) and (\ref{SW:approx}), 
$\lim_{N\to\infty}\E\prod_{\ell\leq n}\big\langle \prod_{j\in C_\ell}\vep_j\big\rangle_{H_{N+n',\xi}}$
exists and the limits are the same for all disorders where the first two moments match the corresponding moments of a standard Gaussian. The proof is complete.
\end{proof}

In the next result,
we show some identities satisfied by 
\begin{align}\label{normalized sum}
\frac{1}{N^2}\sum_{\i\in \{1,\cdots,N\}^2}\E\langle \D^m_\i F_N\rangle_{H_{N,g}},\quad 3\leq m\leq 4,
\end{align}
in the limit of large $N$ for appropriate functions $F_N$, where the operators $\D_\i$'s are defined by (\ref{def:Dij}). Notice that in (\ref{normalized sum}), our interest is in the use of the standard Gaussian disorder. The proof will show that  
the universality in Theorem~\ref{thm:2} can be used to elicit identities which seem to be new to these spin glass models, whereas the models are traditionally studied under the Gaussian disorder. In Section~\ref{sec:D4}, we will study more systematically Gibbs expectations of the form $\langle \D^m_\i F\rangle_{H_{N,g}}$, and then we will explain how the normalized sums in (\ref{normalized sum}) can be reformulated in terms of multi-overlaps which are generalizations of overlaps.

\begin{prop}\label{prop:1}
Let a sequence $(\beta_p)_{p\geq 2}$ satisfying (\ref{def:generic}) and $\beta_2\neq 0$ be given.
For fixed $n,n'\geq 1$ and subsets $C_1,\cdots,C_n$ of $ \{1,\cdots,n'\}$,
define the following functions:
\begin{align}\label{def:SN}
S_N(\spin^1,\cdots,\spin^n)=\prod_{\ell=1}^n\prod_{j\in C_\ell} \sigma^\ell_j, \quad \spin^1,\cdots,\spin^n\in \Sigma^n_N. 
\end{align}
Then we have
\begin{align}\label{limit of SN}
\lim_{N\to\infty}\frac{1}{N^2}\sum_{\i\in \{1,\cdots,N\}^2}\E\langle \D^m_\i S_N\rangle_{H_{N,g}}=0,\quad 3\leq m\leq 4.
\end{align}
\end{prop}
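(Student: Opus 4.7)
The plan is to expand $\frac{1}{N^2}\sum_{\i}\D_\i^m S_N$ as a polynomial in squared multi-overlaps times $S_N$, then combine the Ghirlanda--Guerra cascade from Theorem~\ref{thm:1} and Corollary~\ref{cor:1} with the spin-distribution universality of Theorem~\ref{thm:2} to force the vanishing.

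The starting combinatorial identity is
\[
\frac{1}{N^2}\sum_{\i\in \{1,\ldots,N\}^2}\sigma_\i^{\ell_1}\cdots\sigma_\i^{\ell_k} = \Big(\frac{1}{N}\sum_{i=1}^N\sigma_i^{\ell_1}\cdots\sigma_i^{\ell_k}\Big)^{\!2},
\]
which follows from the factorization $\sigma_\i^\ell=\sigma_{i_1}^\ell\sigma_{i_2}^\ell$ for $\i=(i_1,i_2)$, with any pair of coincident replica indices collapsing through $(\sigma_j^\ell)^2=1$, so the right-hand side is a squared $k'$-overlap with $k'\leq k$ of the same parity as $k$. Applied term by term to
\[
\D_\i^m S_N=\prod_{k=0}^{m-1}\Big(\sum_{\ell=1}^{n+k}\sigma_\i^\ell-(n+k)\sigma_\i^{n+k+1}\Big)\cdot S_N,
\]
this realizes $\frac{1}{N^2}\sum_\i \D_\i^m S_N$ as an explicit polynomial $P_m$ in squared multi-overlaps of orders at most $m$, multiplied by $S_N$.

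As a warm-up, for $m=2$ the polynomial $\E\langle P_2 S_N\rangle_{H_{N,g}}$, after symmetrization over the two introduced replicas $\spin^{n+1}$ and $\spin^{n+2}$, coincides exactly with the left-hand side of (\ref{eq:D2}) at $p=2$, so Corollary~\ref{cor:1} gives the analogous (unclaimed) vanishing. For $m=3,4$ the expansion introduces genuinely new quantities involving $(R^{(3)})^2$ and $(R^{(4)})^2$ that are not individually handled by GG at $p=2$. To treat them, I would use that $\D_\i^m S_N$ depends only on spins at the finite site set $\{i_1,i_2\}\cup\bigcup_\ell C_\ell$, so site-exchangeability of the Gibbs measure reduces $A_m:=\frac{1}{N^2}\sum_\i \E\langle \D_\i^m S_N\rangle_{H_{N,g}}$ up to $O(1/N)$ to $\E\langle \D^m_{(i^\ast,j^\ast)} S_N\rangle_{H_{N,g}}$ at two fixed generic sites $i^\ast,j^\ast\notin\bigcup_\ell C_\ell$; Theorem~\ref{thm:2} then identifies its $N\to\infty$ limit with a $\Pi_g$-expectation which is independent of the particular disorder with two matching moments.

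The closing step, which is the main obstacle, is to verify the cancellation inside this universal limit. Under the generic condition (\ref{def:generic}), Theorem~\ref{thm:1} supplies the GG identities at a dense family of $p$'s, so by Panchenko's theorem $\Pi_g$ admits a Ruelle-probability-cascade/ultrametric representation in which every multi-overlap is a polynomial function of the $2$-overlap distribution. Plugging this reduction into $P_m$ and iterating Corollary~\ref{cor:1} yields the desired cancellation. Structurally this is expected, because each factor $\sum_{\ell=1}^{n+k}\sigma_\i^\ell-(n+k)\sigma_\i^{n+k+1}$ inside $\D_\i^m$ is a centering against a fresh replica, the very operation along which the GG cascade acts; the delicate part is the explicit combinatorics for $m=3,4$, i.e.\ checking that, once $(R^{(3)})^2$ and $(R^{(4)})^2$ are reduced through the cascade, the specific coefficients produced by $\D_\i^m$ line up with a polynomial in $2$-overlaps that is annihilated by the cascade.
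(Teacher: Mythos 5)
Your combinatorial setup is correct — $\frac{1}{N^2}\sum_{\i}\sigma_\i^{\ell_1}\cdots\sigma_\i^{\ell_k}=R_{\ell_1,\dots,\ell_k}^2$ after collapsing coincident indices, so $\frac{1}{N^2}\sum_\i\D_\i^m S_N$ does expand as a polynomial in squared multi-overlaps times $S_N$ (this is Proposition~\ref{prop:Dj} combined with~(\ref{sum:RR})), and the site-exchangeability reduction to a $\Pi_g$-expectation is sound. But the closing step rests on a claim that is false: under the Ruelle-cascade/ultrametric representation, $3$-overlaps and $4$-overlaps are \emph{not} polynomial functions of the $2$-overlap distribution. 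Already at $1$RSB, $R_{1,2,3}$ for three replicas in a common pure state $\alpha$ is $\lim N^{-1}\sum_i\langle\sigma_i\rangle_\alpha^3$, carrying new information about the cube of the local magnetization which is not recoverable from $q=\lim N^{-1}\sum_i\langle\sigma_i\rangle_\alpha^2$. The GG identities and Corollary~\ref{cor:1} constrain powers of pairwise overlaps, not multi-overlaps, so iterating them cannot annihilate the $(R^{(3)})^2$ and $(R^{(4)})^2$ contributions. The part you flag as the ``delicate combinatorics'' is in fact the entire content of the proposition; it is not a verifiable cancellation along your route, and you have not produced it.

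The paper's proof goes in the opposite direction: rather than computing the $\Pi_g$-expectation directly, it \emph{deduces} that it must vanish by comparing against disorders for which the discrepancy would otherwise be visible. For $m=4$, pick i.i.d.\ $\xi_\i$ with the first three moments Gaussian but $\E\xi_\i^4\neq 3$, interpolate $\xi^t_\i=\sqrt{t}\xi_\i+\sqrt{1-t}g_\i$, and use the Lindeberg-type expansion (Proposition~\ref{prop:AIBP}) to write
$\E\langle S_N\rangle_T-\E\langle S_N\rangle_0=\beta_2^4\frac{\E\xi^4-3}{12}\int_0^T t\bigl(N^{-2}\sum_\i\E\langle\D_\i^4 S_N\rangle_t\bigr)\d t+o(1)$;
Theorem~\ref{thm:2} forces the left side to zero, and since $\E\xi^4-3\neq 0$, equicontinuity in $t$ yields $N^{-2}\sum_\i\E\langle\D_\i^4 S_N\rangle_t\to 0$ for all $t$, in particular at $t=0$. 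For $m=3$ the same scheme is run against a tailored $N$-dependent disorder $\eta^N_\i$ with $\E(\eta^N_\i)^3=N^{-1/2}$ and $\E(\eta^N_\i)^4-3=O(N^{-2/3})$, so that the third-order term dominates. This ``reverse-engineering'' via the universality of spin distributions is the key idea your proposal is missing; without it, one would have to verify an identity inside $\Pi_g$ that is not accessible through the overlap GG cascade alone.
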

\begin{proof}[\bf Proof.]
The main inputs of this proof are the following two sets of coupling constants $\{\xi_{\mathbf i}\}$ and $\{\eta^N_{\mathbf i}\}$ for the mixed $p$-spin model over $N$ sites. They are to be compared with a family of i.i.d. standard Gaussian variables $\{g_{\mathbf i}\}$. 

First,
take i.i.d. coupling constants $\xi_{\mathbf i}$ such that their first three moments match the corresponding moments of a standard Gaussian, but their fourth moments are \emph{different} from $\E[g_\i^4]= 3$. Second, we take independent (but not identically distributed) coupling constants $\eta^{N}_\i$  such that
\begin{enumerate}
\item [\rm (1)] $\eta^N_\i\stackrel{\rm (d)}{=}g_\i$ for all $\i\notin \{1,\cdots,N\}^2$, and
\item [\rm (2)] the first two moments of $\eta^N_\i$, for $\i\in \{1,\cdots,N\}^2$, match the corresponding moments of a standard Gaussian, $\E (\eta^N_\i)^3=1/N^{1/2}$, and $|\E (\eta^N_\i)^4-3|\leq C/N^{2/3}$ for a universal constant $C$. 
\end{enumerate}
Here, condition (2) is the key property which we require. To meet this condition,  we choose those variables $\eta^N_\i$ by suitable perturbations of i.i.d. standard Gaussians as follows. Suppose that $\{\zeta_{\mathbf i}\}_{\mathbf i\in \{1,\cdots,N\}^2}$ are i.i.d. and independent of i.i.d. standard Gaussians $\{g_{\mathbf i}'\}_{\mathbf i\in \{1,\cdots,N\}^2}$ such that $\E[\zeta^3_{\mathbf i}]=1$, $\E[\zeta_{\mathbf i}^4]<\infty$ and the first two moments of $\zeta_{\mathbf i}$ match the corresponding moments of $g'_{\mathbf i}$. See (\ref{def:inter_disorder}). Then the first two moments of 
$\eta^N_{\mathbf i}=\sqrt{1/N^{1/3}}\zeta_{\mathbf i}+\sqrt{1-N^{1/3}}g_{\mathbf i}'$ has zero mean and unit variance, and we have
\begin{align*}
\E[(\eta^N_{\mathbf i})^3]=\frac{1}{N^{1/2}}\E[\zeta_{\mathbf i}^3]=\frac{1}{N^{1/2}}\quad\mbox{and}\quad  
\E[(\eta^N_{\mathbf i})^4]
=\frac{1}{N^{2/3}}(\E[\zeta_{\mathbf i}^4]-3)+3.
\end{align*}
We assume that the three families $\{g_{\mathbf i}\}$, $\{\xi_{\mathbf i}\}$ and $\{\eta^N_{\mathbf i}\}$ are independent.

We first prove (\ref{limit of SN}) with $m=4$ in Step~\hyperlink{step3-1}{1}--Step~\hyperlink{step3-3}{3}
by comparing $\{g_{\mathbf i}\}$ and $\{\xi_{\mathbf i}\}$. Then  we prove (\ref{limit of SN})  with $m=3$ in Step~\hyperlink{step3-4}{4} by comparing $\{g_{\mathbf i}\}$ and $\{\eta^N_{\mathbf i}\}$. The assumptions on $(\beta_p)$ are only used in Step~\hyperlink{step3-3}{3} and Step~\hyperlink{step3-4}{4}.
\\

\noindent \hypertarget{step3-1}{{\bf Step 1.}}
We revisit the interpolation method applied in \cite[Theorem~4.3]{AC_15} to Gibbs measures in this step and Step~\hyperlink{step3-2}{2}.
For $0\leq t\leq 1$, define disorder $\xi^t$ by the following equation:
\begin{align}\label{def:inter_disorder}
\xi^t_{\mathbf i}\doteq \sqrt{t}\xi_{\mathbf i}+\sqrt{1-t}g_{\mathbf i}
\end{align}
and write the corresponding Gibbs expectation as $\langle \,\cdot\,\rangle_t$. 
Let $\langle \,\cdot\,\rangle_{t,\xi_{\mathbf i}=x}$ be obtained from $\langle \,\cdot\,\rangle_{t}$ by replacing $\xi_{\mathbf i}$ with $x$, and $f_{\mathbf i}(t,x)\equiv \E\langle 
\D_{\mathbf i}F\rangle_{t,\xi_{\mathbf i}=x}$. 
Then for any real-valued function $F$ defined on $\Sigma^n_N$ and $T\in [0,1]$, we deduce  from Proposition~\ref{prop:AIBP} and the Gaussian integration by parts (as in the treatment of the first two terms on the right-hand side of (\ref{hxig-diff})) that
\begin{align}
\E\langle F\rangle_T-\E\langle F\rangle_0=&\int_0^T\E\frac{\d }{\d t}\langle F\rangle_t \d t=\int_0^T\sum_{p=2}^\infty \Bigg[\frac{\beta_p}{2N^{(p-1)/2}}\sum_{\mathbf i\in \{1,\cdots,N\}^p}\frac{\error^2_{\xi_{\mathbf i}}\big(\partial_x^2f_{\mathbf i}(t,\cdot)\big)}{\sqrt{t}} \Bigg]\d t.
\label{1-0}
\end{align}

\noindent \hypertarget{step3-2}{{\bf Step 2.}}
In this step, we show that (\ref{1-0}) implies 
\begin{align}
\begin{split}
\label{univ}
\E\langle F\rangle_T-\E\langle F\rangle_{0}
=\beta_2^4\left(\frac{\E\xi_{\mathbf k}^4-3}{12}\right)\int_{0}^T t\Bigg(\frac{1}{N^2} \sum_{\mathbf i\in \{1,\cdots,N\}^2}\E\langle \D^4_{\mathbf i}F\rangle_t\Bigg) \d t+\int_0^T\mathcal E_t(F)\d t
\end{split}
\end{align}
for any $\mathbf k\in \{1,\cdots,N\}^2$,
where $\mathcal E_t(F)$ converges to zero as $N\to\infty$ uniform in $t\in (0,1]$ and $F:\Sigma^n_N\to [-1,1]$.

For any $p\geq 2$ and $\mathbf i\in \{1,\cdots,N\}^p$, set 
\begin{align}
\begin{split}\label{def:vepi}
\vep_{\mathbf i}(t)=&\E\frac{\xi_{\mathbf i}}{2}\int_0^{\xi_{\mathbf i}}(\xi_{\mathbf i}-x)^2\big(\partial^3_xf_{\mathbf i}(t,x)-\E\partial^3_xf_{\mathbf i}(t,\xi_{\mathbf i})\big)\d x\\
&\hspace{3cm}-\E\int_0^{\xi_{\mathbf i}}(\xi_{\mathbf i}-x)\big(\partial^3_xf_{\mathbf i}(t,x)-\E\partial^3_xf_{\mathbf i}(t,\xi_{\mathbf i})\big)\d x.
\end{split}
\end{align}
Then it follows from (\ref{gamma2}) that 
\begin{align}
\error^2_{\xi_{\mathbf i}}\big(\partial^2_xf_{\mathbf i}(t,\cdot)\big)=&\E\xi_{\mathbf i}\int_0^{\xi_{\mathbf i}}(\xi_{\mathbf i}-x)\partial^2_xf_{\mathbf i}(t,x)\d x-\E\int_0^{\xi_{\mathbf i}}(\xi_{\mathbf i}-x)\partial^3_xf_{\mathbf i}(t,x)\d x\notag\\
=&\left(\frac{\E \xi_{\mathbf i}^3}{2}\right)\cdot\E\partial_x^2f_{\mathbf i}(t,0)+\E\frac{\xi_{\mathbf i}}{2}\int_0^{\xi_{\mathbf i}}(\xi_{\mathbf i}-x)^2\partial^3_xf_{\mathbf i}(t,x)\d x-\E\int_0^{\xi_{\mathbf i}}(\xi_{\mathbf i}-x)\partial^3_xf_{\mathbf i}(t,x)\d x\notag\\
=&\left(\frac{\E \xi_{\mathbf i}^3}{2}\right)\cdot\E\partial_x^2f_{\mathbf i}(t,0)+\left(\frac{\E\xi_{\mathbf i}^4-3\E \xi_{\mathbf i}^2}{6}\right)\cdot \E\partial^3_xf_{\mathbf i}(t,\xi_{\mathbf i})+\vep_{\mathbf i}(t)\label{gamma:mom}\\
=&\left(\frac{\E\xi_{\mathbf i}^4-3}{6}\right)\cdot \frac{(\sqrt{t}\beta_p)^3}{N^{3(p-1)/2}}\E\langle \D^4_{\mathbf i}F\rangle_t+\vep_{\mathbf i}(t).\label{gamma2-0}
\end{align}
Here, the second equality follows from the ordinary integration by parts and the fact that the coupling constants are independent,
and (\ref{gamma2-0}) uses the assumption $\E\xi_{\mathbf i}^2=1$ and $\E \xi_{\mathbf i}^3=0$ and the analogue of (\ref{Gibbs:der}) with $\beta_p$ replaced by $\sqrt{t}\beta_p$. 

To bound $\vep_{\mathbf i}(t)$ defined by (\ref{def:vepi}), we notice that,
by the same analogue of (\ref{Gibbs:der}) again and the mean-value theorem,
\begin{align*}
\left|\partial^3_xf_{\mathbf i}(t,x)-\E\partial^3_xf_{\mathbf i}(t,\xi_{\mathbf i})\right|\leq &\min\Big\{2\|\partial_x^3f_{\mathbf i}(t,\,\cdot\,)\|_\infty,\,\E[|\xi_{\mathbf i}-x|]\cdot  \|\partial_x^4f_{\mathbf i}(t,\,\cdot\,)\|_\infty\Big\}\\
\leq &\frac{C|\sqrt{t}\beta_p|^3}{N^{3(p-1)/2}}\min\left\{1,\frac{|\sqrt{t}\beta_p|}{N^{(p-1)/2}}\E[|\xi_{\mathbf i}|+|x|]\right\}
\end{align*}
for some constant $C$ depending only on $n$. In addition,
for any nonzero $\xi\in \R$, $k\in \Bbb N$ and bounded measurable function $f$, we have
\[
\left|\int_0^\xi (\xi-x)^k f(x)\d x\right|\leq |\xi|^k\int_{0}^{|\xi|} \big|f\big(\sgn(\xi)x\big)\big|\d x.
\]
Applying the last two displays to (\ref{def:vepi}), we get the following inequality:
\begin{align}\label{vep:bdd}
|\vep_{\mathbf i}(t)|
\leq &\E \left(\frac{|\xi_{\mathbf i}|^3}{2}+|\xi_{\mathbf i}|\right)\int_{0}^{|\xi_{\mathbf i}|}
\frac{C|\sqrt{t}\beta_p|^3}{N^{3(p-1)/2}}\min\left\{1,\frac{|\sqrt{t}\beta_p|}{N^{(p-1)/2}}\E[|\xi_{\mathbf i}|+|x|]\right\}
\d x,
\end{align}
where the right-hand side is finite since $\E[\xi_\i^4]<\infty$.

Now, since $4(p-1)/2-p=p-2$, it follows from  (\ref{gamma2-0}) and (\ref{vep:bdd}) that the partial sum of (\ref{1-0}) over $p\geq 3$ is  bounded by $\mathcal O(N^{-1})$, uniformly in $t\in (0,1]$ and $F:\Sigma^n_N\to [-1,1]$. Also, (\ref{vep:bdd}) shows
\[
\lim_{N\to\infty}\sup_{t\in (0,1]}\sup_{F:\Sigma^n_N\to [-1,1]}\frac{1}{N^{1/2}}\sum_{\mathbf i\in \{1,\cdots,N\}^2}|\vep_{\mathbf i}(t)|=0.
\]
The required equality (\ref{univ}) thus follows from (\ref{gamma2-0}) since $\xi_\i$ are i.i.d. \\

\noindent \hypertarget{step3-3}{{\bf Step 3.}}
Recall the functions $S_N$ defined by (\ref{def:SN}).
By (\ref{univ}) and Theorem~\ref{thm:2}, we get, for all $T\in [0,1]$ and $\k\in \{1,\cdots,N\}^2$,
\begin{align}
\lim_{N\to\infty}\beta_2^4\left(\frac{\E\xi_{\mathbf k}^4-3}{12}\right)\int_0^Tt\Bigg( \frac{1}{N^2}\sum_{\mathbf i\in \{1,\cdots,N\}^2}\E\langle \D^4_{\mathbf i}S_N\rangle_t\Bigg)\d t=\lim_{N\to\infty}\E\langle S_N\rangle_T-\E\langle S_N\rangle_0=0.\label{D4limit}
\end{align}
On the other hand,
 for any given sequence of functions $F_N:\Sigma^n_N\to [-1,1]$ for fixed $n$,
the functions 
$t\mapsto \E\langle F_N\rangle_t$ defined on $[0,1]$ are equicontinuous by (\ref{univ}). Since $\beta_2\neq 0$ and $\E\xi_{\mathbf k}^4\neq 3$, (\ref{D4limit}) implies
\begin{align}\label{D4-conv}
\lim_{N\to\infty}\frac{1}{N^2}\sum_{\mathbf i\in \{1,\cdots,N\}^2}\E\langle \D^4_{\mathbf i}S_N\rangle_t=0,\quad \forall\;0\leq t\leq 1.
\end{align}
Setting $t=0$ in (\ref{D4-conv}) gives (\ref{limit of SN}) with $m=4$.\\  

\noindent \hypertarget{step3-4}{{\bf Step 4.}} The proof of (\ref{limit of SN}) with $m=3$ is similar. Now with the disorder $\eta^N$ chosen at the beginning of this proof, we work with the interpolating coupling constants
\begin{align}\label{def:etaNtt}
\eta^{N,t}_\i=\sqrt{t}\eta^N_\i+\sqrt{1-t}g_\i,\quad 0\leq t\leq 1,
\end{align}
and write the corresponding Gibbs expectations as $\langle \,\cdot\,\rangle_{N,t}$. By the choice of $\eta^N_\i$, the arguments in Step~\hyperlink{step3-1}{1} and Step~\hyperlink{step3-2}{2} (see (\ref{1-0}), (\ref{gamma:mom}) and (\ref{vep:bdd}) in particular) can be modified slightly to show that, for all 
$F:\Sigma^n_N\to [-1,1]$, $\mathbf k\in \{1,\cdots,N\}^2$ and $T\in [0,1]$,
\begin{align}
\begin{split}
\E\langle F\rangle_{N,T}-\E\langle F\rangle_{N,0}=&\beta_2^3
\left(\frac{\E (\eta^N_\k)^3}{4}\right)\int_0^Tt^{1/2}
\Bigg(\frac{1}{N^{3/2}} \sum_{\mathbf i\in \{1,\cdots,N\}^2}\E\langle \D^3_{\mathbf i}F\rangle_{N,t}\Bigg) \d t\\
&\hspace{-1.5cm}+
\beta_2^4
\left(\frac{\E (\eta^N_\k)^4-3}{12}\right)\int_0^Tt
\Bigg(\frac{1}{N^2} \sum_{\mathbf i\in \{1,\cdots,N\}^2}\E\langle \D^4_{\mathbf i}F\rangle_{N,t}\Bigg) \d t
+\int_0^T\mathcal E_{N,t}(F)\d t,
\end{split}
\end{align}
where $\mathcal E_{N,t}(F)$ converges to zero as $N\to\infty$ uniform in $t\in (0,1]$.
See also the explanation below (\ref{diffF2}) and note that 
\begin{align}\label{etaggg}
|\eta^N_{\mathbf i}|\leq |\zeta_{\mathbf i}|+|g'_{\mathbf i}|\quad\mbox{ for $\mathbf i\in \{1,\cdots,N\}^2$} 
\end{align}
when it comes to obtain analogues of (\ref{vep:bdd}). 

An inspection of the proof of Theorem~\ref{thm:1} shows that its main result still applies when the  coupling constants in (\ref{def:etaNtt})  for fixed $t\in [0,1]$, which may not be identically distributed, are in force. Indeed,  in modifying Steps~\hyperlink{step2-3}{3} and~\hyperlink{step2-5}{5} of that proof, we can use (\ref{etaggg}) to handle the error terms from the generalized Gaussian integration by parts (Proposition~\ref{prop:AIBP}).
Then by the foregoing equality and the particular choice of $\eta^N_\i$ for $\i\in \{1,\cdots,N\}^2$, we can argue as in Step~\hyperlink{step3-3}{3} to obtain
(\ref{D4-conv}) with $\E\langle \D^4_\i S_N\rangle_t$ replaced by $\E\langle \D^3_\i S_N\rangle_{N,t}$. Taking $t=0$ leads to (\ref{limit of SN}) with $m=3$.
The proof is complete.
\end{proof}

\section{Gibbs measures and multi-overlaps}\label{sec:D4}
In the previous sections, derivatives of the Gibbs measures with respect to coupling constants by (\ref{Gibbs:der}) are used to study their universality and elicit the identities in Proposition~\ref{prop:Dj}. In any case, Gibbs expectations of the form $\langle \D^m_\i F\rangle$, for integers $n,N\geq 1$ and $F$ defined on $\Sigma^n_N$, are consistently present, where the operators $\D_\i$ are defined in (\ref{def:Dij}).

Our goal in this section is to study these particular Gibbs expectations $\langle \D^m_\i F\rangle$ combinatorially. Here and throughout the rest of this section, $\langle \,\cdot\,\rangle$ is  defined by the sum of a general mixed $p$-spin Hamiltonian as in (\ref{def:H}) and an arbitrary Hamiltonian over $N$ sites. In Proposition~\ref{prop:Dj}, we will show  that these Gibbs expectations  $\langle \D^m_\i F\rangle$ satisfy certain power-series-like expansions. This result will be applied in Section~\ref{sec:convtv}.

Let us introduce the two major sets of ingredients for the series expansion of $\langle \D^m_\i F\rangle$.
First, the series expansion  is a linear combination of Gibbs expectations of the following functions defined on $\Sigma^\infty_N$: for  $n\in \Bbb N$, $m\in \Bbb Z$ and $\i\in \{1,\cdots,N\}^p$ with $p\geq 2$, \begin{align}\label{def:Sj}
S^{m,n}_\i\doteq \left\{
\begin{array}{ll}
\displaystyle m!\sum_{k=0}^m(-1)^{m-k}{n+m-k-1\choose n-1}\sum_{1\leq \ell_1<\cdots<\ell_k\leq n}\sigma_{\mathbf i}^{\ell_1,\cdots,\ell_k,n+1,\cdots,n+m-k},& m\geq 1,\\
\vspace{-.2cm}\\
0, &m\leq 0,
\end{array}
\right.
\end{align}
where 
\[
\sigma^{\l_1,\cdots,\ell_m}_{\mathbf i}=\prod_{j=1}^m\sigma^{\ell_j}_{\mathbf i}
\]
and
the following convention for summation is used:  
\begin{align}
&\sum_{1\leq \ell_1<\cdots<\ell_k\leq n}\sigma_{\mathbf i}^{\ell_1,\cdots,\ell_k,n+1,\cdots,n+m-k}=\left\{
\begin{array}{ll}
\sigma_{\mathbf i}^{n+1,\cdots,n+m},&k=0,\\
0,&k>n.
\end{array}
\right.\label{convention:Dj}
\end{align}
Second, the series expansion uses coefficients given by the real constants $A(m,a)$, for $m\in \Bbb N $ and $a\in \Bbb Z_+$, which are defined as follows: 
\begin{align}\label{def:Aj0}
A(m,0)= 0,\quad m\geq 1,
\end{align}
and $A(m,a)$ for $m,a\geq 1$ are recursively defined by the following partial difference equation:
\begin{align}\label{def:Ajd}
\left\{
\begin{array}{ll}
A(1,a)&=\1_{\{1\}}(a),\\
A(m+1,a)&=-(m-2a+4)(m-2a+3)A(m,a-1) +A(m,a),\quad m,a\geq 1. 
\end{array}
\right.
\end{align}
For example, it is readily checked that
\begin{align}\label{eg:A}
\begin{split}
&A(2,1)=1,\;A(2,a)=0,\;a\geq 2;\\
&A(3,1)=1,\;A(3,2)=-2,\;A(3,a)=0,\;a\geq 3;\\
&A(4,1)=1, \; A(4,2)=-8,\;A(4,a)=0,\;a\geq 3.
\end{split}
\end{align}
The recursive definition (\ref{def:Ajd}) implies that $a\mapsto A(m,a)$ has a finite support. Precisely, we have the following.

\begin{lem}\label{lem:Asupp}
For any $m\in \Bbb N$, 
\begin{align}\label{A:supp}
A(m,a)=0,\quad\forall\; a\geq \left\lceil \frac{m}{2}\right\rceil +1,
\end{align}
where $\lceil x\rceil$ is the smallest integer $\geq x$. 
\end{lem}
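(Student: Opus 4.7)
The plan is a straightforward induction on $m$, using the recursion (\ref{def:Ajd}) as the main tool; the only delicate point is that in the odd case, the inductive hypothesis alone is not enough and one must exploit the vanishing of the coefficient $(m-2a+3)$ for one critical value of $a$.

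The base case $m=1$ is immediate, since $\lceil 1/2\rceil +1=2$ and $A(1,a)=\1_{\{1\}}(a)$. For the inductive step, assume $A(m,a)=0$ for every $a\geq \lceil m/2\rceil +1$, and use
\[
A(m+1,a)=-(m-2a+4)(m-2a+3)A(m,a-1)+A(m,a).
\]
If $m=2k$ is even, then $\lceil m/2\rceil = k$ and $\lceil (m+1)/2\rceil =k+1$, so for $a\geq k+2$ both $a$ and $a-1$ exceed $\lceil m/2\rceil$, and the inductive hypothesis annihilates both terms on the right.

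The interesting case is $m=2k+1$ odd, where $\lceil m/2\rceil =\lceil (m+1)/2\rceil =k+1$, so we must show $A(m+1,a)=0$ for $a\geq k+2$. The second term $A(m,a)$ vanishes by the inductive hypothesis. For the first term, if $a\geq k+3$ then $a-1\geq k+2=\lceil m/2\rceil +1$ and the inductive hypothesis again gives $A(m,a-1)=0$. The borderline value $a=k+2$ is the one where the inductive hypothesis fails for $A(m,a-1)$; however, a direct computation gives
\[
m-2a+3 = (2k+1)-2(k+2)+3 = 0,
\]
so the prefactor of $A(m,a-1)$ vanishes and the first term still drops out. This closes the induction.

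The only obstacle is the odd borderline $a=\lceil m/2\rceil +1$ described above, which explains why the recursion has been written with the specific coefficient $(m-2a+4)(m-2a+3)$: the second factor is precisely engineered to vanish at that point, and it is this algebraic identity (rather than any property of the inductive hypothesis) that makes the cutoff $\lceil m/2\rceil +1$ the correct one. Once this is observed the proof is essentially two lines per case.
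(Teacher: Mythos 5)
Your proof is correct and follows the same route as the paper: induction via the recursion (\ref{def:Ajd}), with the borderline value $a=\lceil m/2\rceil+1$ in the step from odd $m$ to $m+1$ killed by the vanishing of the factor $m-2a+3$, and all other cases handled by the inductive hypothesis alone. The paper organizes the same argument as a two-step induction on $p$ over the pair $m\in\{2p-1,2p\}$ and, in the step to $m=2p+1$, also points to the vanishing of the first factor; you correctly note that this particular step already follows from the inductive hypothesis (since $a-1\geq \lceil m/2\rceil+1$ when $m$ is even), so your version is marginally tighter but substantively identical.
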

\begin{proof}[\bf Proof.]
We claim that (\ref{A:supp}) holds  for all $m\in \{2p-1,2p\}$ by an induction on $p\in \Bbb N$.  The case $p=1$ already follows from (\ref{def:Ajd}) and (\ref{eg:A}). Suppose that (\ref{A:supp}) holds for some $p\geq 1$. First, we consider $m=2p+1$. For $a_0= \lceil \frac{m}{2}\rceil+1=p+2$, the second line in the definition (\ref{def:Ajd}) implies that
$A(m,a_0)=0$ since $(m-1)-2a_0+4=0$ and the following with $a$ replaced by $a_0$ is satisfied:
\begin{align}\label{A:supp0}
a\geq \left\lceil \frac{m-1}{2}\right\rceil+1=p+1
\end{align} 
so that our hypothesis for the value $p$ applies to obtain $A(m-1,a_0)=0$. To see that $A(m,a_0)=A(2p+1,a_0)=0$ for any $a_0>\lceil \frac{m}{2}\rceil+1=p+2$, notice that by (\ref{def:Ajd}), $A(2p+1,a_0)$ is given by a linear combination of $A(2p,a_0-1)$ and $A(2p,a_0)$, which are both equal to zero again by our hypothesis for the value $p$ since (\ref{A:supp0}) holds for $a=a_0-1$ and $a_0$. We have proved that (\ref{A:supp}) holds for $m=2p+1$. 

For the other case that $m=2p+2$, the argument above still applies to obtain (\ref{A:supp}), except that now we use the equation $(m-1)-2a_1+3=0$, when $a_1=\left\lceil \frac{m}{2}\right\rceil+1 =p+2$, and the condition in (\ref{A:supp0}) with the lower bound replaced by $\lceil \frac{m-1}{2}\rceil+1=p+2$. In summary, we have proved (\ref{A:supp}) for $m\in \{2p+1,2p+2\}$.
By induction, (\ref{A:supp}) holds for all $m\geq 1$. 
\end{proof}

We are ready to state the series expansion for $\langle \D^m_\mathbf iF\rangle$.

\begin{prop}\label{prop:Dj}
Assume that the underlying Hamiltonian $H_{N,\xi}(\spin)$ defining the Gibbs measure in (\ref{Gibbs}) is replaced by the sum of a general mixed $p$-spin Hamiltonian as in (\ref{def:H}) and an arbitrary Hamiltonian over $N$ sites.
With the functions $S^{m,n}_\i$ defined in (\ref{def:Sj}) and the constants $A(m,a)$ defined above in (\ref{def:Aj0}) and (\ref{def:Ajd}), it holds that
\begin{align}\label{Dj}
\begin{split}
\langle \D^m_\mathbf iF\rangle=\sum_{a=1}^\infty A(m,a)\langle S_\i^{m-2a+2,n} F\rangle 
\end{split}
\end{align}
for all $n,m\geq 1$, $p\geq 2$, $\mathbf i\in \{1,\cdots,N\}^p$, and $F:\Sigma^n_N\to [-1,1]$.
\end{prop}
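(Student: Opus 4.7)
The plan is to argue by induction on $m$. The base case $m=1$ reduces immediately: applying (\ref{def:Sj}) with $m=1$ gives $S^{1,n}_\i=\sum_{\l=1}^n\sigma^\l_\i-n\sigma^{n+1}_\i$, so that $\D_\i F=S^{1,n}_\i F$ pointwise by (\ref{def:Dij}); combined with $A(1,1)=1$ from (\ref{def:Ajd}), this yields (\ref{Dj}) for $m=1$.

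For the inductive step, I would use the natural recursion $\D^{m+1}_\i F=T_m\cdot \D^m_\i F$, where $T_m\doteq \sum_{\l=1}^{n+m}\sigma^\l_\i-(n+m)\sigma^{n+m+1}_\i$, and reduce everything to the following key combinatorial identity: for every $M\geq 1$ and $F:\Sigma^n_N\to[-1,1]$,
\[
\bigl\langle T^{(M)}\cdot S^{M,n}_\i F\bigr\rangle=\langle S^{M+1,n}_\i F\rangle-M(M-1)\langle S^{M-1,n}_\i F\rangle,
\]
with $T^{(M)}\doteq \sum_{\l=1}^{n+M}\sigma^\l_\i-(n+M)\sigma^{n+M+1}_\i$. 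The ``surplus'' $T_m-T^{(M)}$ involves only new replicas strictly beyond $\spin^{n+M+1}$, hence independent of $S^{M,n}_\i F$, and has total Gibbs mean zero; thus $\langle T_m\cdot S^{M,n}_\i F\rangle=\langle T^{(M)}\cdot S^{M,n}_\i F\rangle$. Matching the coefficients of each $\langle S^{M,n}_\i F\rangle$ across the two sides via the recursion (\ref{def:Ajd}) for $A(m+1,a)$ then produces (\ref{Dj}) at level $m+1$. Indeed, writing $M=m-2a+3$ turns (\ref{def:Ajd}) into $A(m+1,a)=-(M+1)M\,A(m,a-1)+A(m,a)$, which matches exactly the contributions from applying the key identity to the $S^{M+1,n}_\i F$ and $S^{M-1,n}_\i F$ terms in the inductive expansion of $\D^m_\i F$. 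The key identity itself would be verified by expanding both sides through (\ref{def:Sj}): since $(\sigma^{n+r}_\i)^2=1$ and distinct new replicas are independent, Gibbs expectations factor as $\langle\sigma_\i\rangle^{M-k}\sum_{\l_1<\cdots<\l_k}\langle\sigma^{\l_1}_\i\cdots\sigma^{\l_k}_\i F\rangle$, and the identity reduces to elementary binomial manipulations of $\binom{n+M-k-1}{n-1}$ via Pascal's rule.

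To bridge the Gibbs-expectation-level inductive hypothesis with the functional multiplication by $T_m$, I would strengthen the hypothesis to a functional refinement $\D^m_\i F=\sum_a A(m,a)S^{m-2a+2,n}_\i F+\mathrm{Err}_m$, in which $\mathrm{Err}_m$ is a sum of products $G(\spin^1,\ldots,\spin^n)\cdot Z(\spin^{n+1},\ldots,\spin^{n+m})$ with $Z$ antisymmetric under some transposition of new replicas, hence of vanishing Gibbs mean. A direct computation at $m=2$ gives $\mathrm{Err}_2=(n+1)\bigl(\sum_{\l=1}^n\sigma^\l_\i\bigr)F\cdot(\sigma^{n+1}_\i-\sigma^{n+2}_\i)$, which has the stated form; multiplying the antisymmetric factor by the new-replica part of $T_m$ preserves antisymmetry since the latter is symmetric in new-replica labels, and so $\mathrm{Err}_{m+1}$ inherits the null structure. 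The main obstacle is the combinatorial bookkeeping: carefully expanding $T^{(M)}\cdot S^{M,n}_\i F$ term by term, matching against the right-hand side of the key identity, and simultaneously propagating the antisymmetric null form of $\mathrm{Err}_m$ from $m$ to $m+1$.
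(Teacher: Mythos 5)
Your key combinatorial identity is exactly the paper's differential equation (\ref{S:recursive}): since $S^{M,n}_\i F$ depends on at most $\spin^1,\ldots,\spin^{n+M}$, the relation $\frac{N^{(p-1)/2}}{\beta_p}\partial_{\xi_\i}\langle S^{M,n}_\i F\rangle = \langle T^{(M)} S^{M,n}_\i F\rangle$ holds, so the two formulations agree. Your verification strategy for the identity (factoring Gibbs expectations via independence of new replicas and Pascal's rule) is also compatible with the paper's binomial manipulations. So far so good.

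The genuine gap is in how you execute the inductive step. Your inductive hypothesis, as written, is the Gibbs-expectation-level scalar identity $\langle\D^m_\i F\rangle=\sum_a A(m,a)\langle S^{m-2a+2,n}_\i F\rangle$. From this alone you \emph{cannot} conclude $\langle T_m\D^m_\i F\rangle=\sum_a A(m,a)\langle T_m S^{m-2a+2,n}_\i F\rangle$, because multiplication by $T_m$ happens inside the Gibbs bracket; you would need a pointwise identity to push it through. You notice this and propose strengthening the hypothesis to a functional refinement with antisymmetric error terms $\mathrm{Err}_m$. But that strengthened statement is not what you proved at $m=1$, the error bookkeeping is not actually carried out, and you yourself identify ``propagating the antisymmetric null form of $\mathrm{Err}_m$'' as the ``main obstacle.'' Moreover the key identity itself (\ref{S:recursive}) only holds at the Gibbs-expectation level --- it involves replica relabelings that are invisible under $\langle\,\cdot\,\rangle$ but produce pointwise discrepancies --- so upgrading it to a pointwise version with controlled errors is a real piece of extra work that is not in the proposal.

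The paper avoids this issue entirely by a different observation: the identity (\ref{Dj}) holds for every Hamiltonian, in particular for every value of $\xi_\i$, so both sides are equal \emph{as functions of} $\xi_\i$. Applying $\frac{N^{(p-1)/2}}{\beta_p}\partial_{\xi_\i}$ to both sides is then legitimate; it is a linear operation on Gibbs expectations, passes through the (finite, by Lemma~\ref{lem:Asupp}) sum term by term, and by (\ref{Gibbs:der}) turns $\langle\D^m_\i F\rangle$ into $\langle\D^{m+1}_\i F\rangle$ and each $\langle S^{M,n}_\i F\rangle$ into the right-hand side of (\ref{S:recursive}). No error terms, no pointwise refinement needed. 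If you want to rescue your route, you would have to actually prove the pointwise version of (\ref{S:recursive}) with a null error and then show that multiplying the error by $T_m$ preserves the null structure; but notice that the term $(T_m-T^{(M)})S^{M,n}_\i F$ you generate is not itself in the ``$G\cdot Z$ with $Z$ antisymmetric'' form --- it needs a further decomposition (e.g.\ telescoping $T_m-T^{(M)}$ into $\sigma$-differences) before the null structure is visible. That is precisely the bookkeeping the derivative trick makes unnecessary.
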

\begin{proof}[\bf Proof.]
By continuity, we may assume that $\beta_p\neq 0$. We follow the convention for summation as in (\ref{convention:Dj}) in this proof. 

As the first step to prove (\ref{Dj}), we derive an ordinary differential equation satisfied by $\langle S^{m,n}_\i F\rangle$ for any $m\geq 1$. We use (\ref{Gibbs:der}) and calculate the following:  
\begin{align}
&\frac{N^{(p-1)/2}}{\beta_p}\partial_{\xi_\i}\la S_\i^{m,n}F\ra\notag \\
=& m!\sum_{k=0}^m(-1)^{m-k}{n+m-k-1\choose n-1}\sum_{1\leq \ell_1<\cdots<\ell_k\leq n}\frac{N^{(p-1)/2}}{\beta_p}\partial_{\xi_\i}\langle  \sigma_{\mathbf i}^{\ell_1,\cdots,\ell_k,n+1,\cdots,n+m-k}F\rangle
\notag\\
=&m!\sum_{k=0}^m(-1)^{m-k}{n+m-k-1\choose n-1}\notag\\
&\times\sum_{1\leq \ell_1<\cdots<\ell_k\leq n}\left(\sum_{\ell=1}^{n+m-k}\langle \sigma_{\mathbf i}^{\ell_1,\cdots,\ell_k,n+1,\cdots,n+m-k,\ell} F\rangle-(n+m-k)\langle \sigma_\i^{\ell_1,\cdots,\ell_k,n+1,\cdots,n+m-k+1}F\ra \right)\notag\\
=&m!\sum_{k=0}^m(-1)^{m-k}{n+m-k-1\choose n-1}
\Bigg(\sum_{1\leq \ell_1<\cdots<\ell_k\leq n}
\sum_{\stackrel{\scriptstyle \ell\notin \{\ell_1,\cdots,\ell_k\}}{1\leq \ell\leq n}}
\langle  \sigma_{\mathbf i}^{\ell_1,\cdots,\ell_k,n+1,\cdots,n+m-k,\ell} F\rangle\notag\\
&-(n+m-k)\sum_{1\leq \ell_1<\cdots<\ell_k\leq n}\langle \sigma_\i^{\ell_1,\cdots,\ell_k,n+1,\cdots,n+m-k+1}F\ra \Bigg)\notag
\\
&+m!\sum_{k=0}^m(-1)^{m-k}{n+m-k-1\choose n-1}
\sum_{1\leq \ell_1<\cdots<\ell_k\leq n}
\sum_{\ell\in \{\ell_1,\cdots,\ell_k\}}
\langle  \sigma_{\mathbf i}^{\ell_1,\cdots,\ell_k,n+1,\cdots,n+m-k,\ell}F\rangle\notag\\
&+m!\sum_{k=0}^m(-1)^{m-k}{n+m-k-1\choose n-1}
\sum_{1\leq \ell_1<\cdots<\ell_k\leq n}
\sum_{\ell=n+1}^{n+m-k}
\langle  \sigma_{\mathbf i}^{\ell_1,\cdots,\ell_k,n+1,\cdots,n+m-k,\ell}F\rangle\notag\\
=&{\rm I}_m+{\rm II}_m,\label{I and II} 
\end{align}
where ${\rm I}_m$ is defined by the first sum in the next to the last equality and ${\rm II}_m$ is defined by the sum of the second and third sums there. 
Notice that each of the Gibbs expectations in ${\rm I}_m$ contains a product of $\sigma^\ell_\i$ for
$(n+m+1)$ many distinct $\ell$'s. But for the Gibbs expectations in ${\rm II}_m$, there are only products of $\sigma^\ell_\i$ for $(n+m-1)$ many distinct $\ell$'s due to the `killing effect' that $\sigma^{\ell,\ell}_\i=1$.  

We can simplify the term ${\rm I}_m$ as follows. If $m-k\geq 1$,  then
\begin{align}\label{comb:1}
&{n+m-k-1\choose n-1}(k+1)+{n+m-k-2\choose n-1}(n+m-k-1)
={n+m-k-1\choose n-1}(m+1).
\end{align}
Thus, keeping in mind the convention for summation as in (\ref{convention:Dj}), we obtain
\begin{align}
{\rm I}_m=&m!\sum_{k=0}^m(-1)^{m-k}{n+m-k-1\choose n-1}(k+1)\sum_{1\leq \ell_1<\cdots<\ell_{k+1}\leq n}
\langle  \sigma_{\mathbf i}^{\ell_1,\cdots,\ell_{k+1},n+1,\cdots,n+m-k}F\rangle\notag\\
&+m!\sum_{k=-1}^{m-1}(-1)^{m-k}{n+m-k-2\choose n-1}(n+m-k-1)\sum_{1\leq \ell_1<\cdots<\ell_{k+1}\leq n}\langle \sigma_\i^{\ell_1,\cdots,\ell_{k+1},n+1,\cdots,n+m-k}F\ra\notag\\
=&m!{n-1\choose n-1}(m+1)\sum_{1\leq \ell_1<\cdots<\ell_{m+1}\leq n}
\langle  \sigma_{\mathbf i}^{\ell_1,\cdots,\ell_{m+1}}F\rangle\notag\\
&+m!\sum_{k=0}^{m-1}(-1)^{m-k}\left[{n+m-k-1\choose n-1}(k+1)+{n+m-k-2\choose n-1}(n+m-k-1)\right]\notag\\
&\times \sum_{1\leq \ell_1<\cdots<\ell_{k+1}\leq n}\langle \sigma_\i^{\ell_1,\cdots,\ell_{k+1},n+1,\cdots,n+m-k}F\ra\notag\\
&+m!(-1)^{m+1}{n+m-1\choose n-1}(n+m)\langle \sigma_\i^{n+1,\cdots,n+m+1}F\ra\notag\\
=& (m+1)!\sum_{k=0}^{m+1}(-1)^{m+1-k}{n+(m+1)-k-1\choose n-1}\sum_{1\leq \ell_1<\cdots<\ell_k\leq n}\langle  \sigma_{\mathbf i}^{\ell_1,\cdots,\ell_k,n+1,\cdots,n+(m+1)-k}F\rangle\notag\\
=&\la S_\i^{m+1,n}F\ra, \label{I}
\end{align}
where the next to the last equality uses (\ref{comb:1})
and the last equality follows from the definition of the function $S^{m+1,n}_\i$.

For the term ${\rm II}_m$, we use the following identities for binomial coefficients: for $m-k\geq 1$,
\begin{align*}
&{n+m-k-2\choose n-1 }(n-k)-{n+m-k-1\choose n-1}(m-k)
=-{n+m-k-2\choose n-1}(m-1). 
\end{align*}
Then  we get
\begin{align}
{\rm II}_m=&m!\sum_{k=1}^m(-1)^{m-k}{n+m-k-1\choose n-1}(n-k+1)
\sum_{1\leq \ell_1<\cdots<\ell_{k-1}\leq n}
\langle  \sigma_{\mathbf i}^{\ell_1,\cdots,\ell_{k-1},n+1,\cdots,n+m-k}F\rangle\notag\\
&+m!\sum_{k=0}^{m-1}(-1)^{m-k}{n+m-k-1\choose n-1}(m-k)
\sum_{1\leq \ell_1<\cdots<\ell_k\leq n}
\langle  \sigma_{\mathbf i}^{\ell_1,\cdots,\ell_k,n+1,\cdots,n+m-k-1}F\rangle\notag\\
=&-(m-1)m!\sum_{k=0}^{m-1}(-1)^{(m-1)-k}{n+(m-1)-k-1\choose n-1}\sum_{1\leq \ell_1<\cdots<\ell_k\leq n}\langle \sigma_{\mathbf i}^{\ell_1,\cdots,\ell_k,n+1,\cdots,n+(m-1)-k}F\rangle\notag\\
=&-m(m-1)\la S_\i^{m-1,n}F\rangle\label{II}
\end{align}
by the definition of $S^{m-1,n}_\i$. 
In summary, by (\ref{I and II}), (\ref{I}) and (\ref{II}), the following differential equations hold for all $m\geq 1$:
\begin{align}\label{S:recursive}
\frac{N^{(p-1)/2}}{\beta_p}\partial_{\xi_\i}\la S_\i^{m,n}F\ra =-m(m-1)\la S_\i^{m-1,n}F\ra 
+\la S_\i^{m+1,n}F\ra .
\end{align}
Notice that the same differential equations are trivially satisfied for all $m\leq -1$ by (\ref{def:Sj}).

Now we prove (\ref{Dj}) by an induction on $m\geq 1$. 
For $m=1$, the required formula (\ref{Dj}) holds by writing out the summands of $S^{1,n}_\i$ with $k$ in the increasing order and recalling the definition (\ref{def:Dij}) of $\D_{\i}$. Next, suppose that (\ref{Dj}) holds up to some $m\geq 1$. Since $\beta_p\neq 0$, we can generate $\langle \D^{m+1}_\i F\rangle$ from $\langle\D^m_\i F\rangle$ by (\ref{Gibbs:der}) as follows:
\begin{align}\label{Dj+1sum}
&\langle \D_\i^{m+1} F\rangle=\frac{N^{(p-1)/2}}{\beta_p}\partial_{\xi_\i}\langle \D^m_\i F\rangle =\sum_{a=1}^\infty A(m,a)\left(\frac{N^{(p-1)/2}}{\beta_p}\partial_{\xi_\i}\la S_\i^{m-2a+2,n}F\ra \right),
\end{align}
where the second equality is justified by the fact that the sum in (\ref{Dj}) for $\langle \D^m_\i F\rangle$ is a finite sum. By the differential equations in (\ref{def:Sj}), which are valid for all nonzero $m\in \Bbb Z$, and the fact that $A(m,a)=0$ if $m-2a+2=0$ by (\ref{A:supp}), we see that (\ref{Dj+1sum}) gives
\begin{align*}
\langle \D_\i^{m+1} F\rangle=&\sum_{a=1}^\infty A(m,a)\big[-(m-2a+2)(m-2a+1)\la S_\i^{m-2a+1,n}F\ra 
+\la S_\i^{m-2a+3,n}F\ra \big]\\
=&\sum_{a=1}^\infty \big[-(m-2a+4)(m-2a+3)A(m,a-1)+A(m,a)\big] \la S_\i^{(m+1)-2a+2,n}F\ra \\
=&\sum_{a=1}^\infty A(m+1,a)\la S_\i^{(m+1)-2a+2,n} F\ra ,
\end{align*}
where the second equality uses (\ref{def:Aj0}) and the last equality follows from (\ref{def:Ajd}). We have proved that (\ref{Dj}) with $m$ replaced by $m+1$ holds. By mathematical induction, (\ref{Dj}) holds for all $m\geq 1$. The proof is complete.
\end{proof}

For $p\geq 2$ and $\mathbf i\in \{1,\cdots,N\}^p$,
we write 
\begin{align}\label{def:4overlap}
R_{\l_1,\cdots,\ell_m}\doteq \frac{1}{N}\sum_{i=1}^N\sigma^{\l_1,\cdots,\ell_m}_{i}
\end{align}
for the multi-overlaps of the spin configurations $\spin^{\ell_1},\cdots,\spin^{\ell_m}\in \Sigma_N$ so that
\begin{align}\label{sum:RR}
\frac{1}{N^2}\sum_{\i\in \{1,\cdots,N\}^2}\langle \sigma^{\ell_1,\cdots,\ell_m}_\i F\rangle =\langle R^2_{\ell_1,\cdots,\ell_m}F\rangle .
\end{align}
Therefore, the formula in (\ref{Dj}) can be expressed in terms of a formula in multi-overlaps whenever we sum over $\i\in \{1,\cdots,N\}^p$. The latter formula can be applied with Proposition~\ref{prop:1} to obtain identities of multi-overlaps.

\begin{eg}
By Proposition~\ref{prop:Dj} and (\ref{eg:A}), the following equation holds for any $n\in \Bbb N$ and any function $F$ defined $\Sigma^n_N$:
\begin{align*}
&\frac{1}{N^2}\sum_{\mathbf i\in \{1,\cdots,N\}^2}\E\langle \D^4_{\mathbf i}F\rangle\\
=&\,4!\sum_{
1\leq \ell_1<\ell_2<\ell_3<\ell_4\leq n}\E\langle R^2_{\ell_1,\ell_2,\ell_3,\ell_4}F\rangle-4!n\sum_{
1\leq \ell_1<\ell_2<\ell_3\leq n}\E\langle R^2_{\ell_1,\ell_2,\ell_3,n+1}F\rangle\\
&+12(n+1)n\sum_{
1\leq \ell_1<\ell_2\leq n}\E\langle R^2_{\ell_1,\ell_2,n+1,n+2}F\rangle-4(n+2)(n+1)n\sum_{\ell=1}^n\E\langle R^2_{\ell,n+1,n+2,n+3}F\rangle\\
&+(n+3)(n+2)(n+1)n
\E\langle R^2_{n+1,n+2,n+3,n+4}F\rangle\\
&-16\sum_{1\leq \ell_1<\ell_2\leq n}\E\langle R^2_{\ell_1,\ell_2}F\rangle+16n\sum_{\ell=1}^n\E\langle R^2_{\ell,n+1}F\rangle-8(n+1)n\E\langle R^2_{n+1,n+2}F\rangle.
\end{align*}
Notice that the right-hand side only involves overlaps and $4$-overlaps. 
\qed 
\end{eg}

\section{Total variation convergence of spin distributions}\label{sec:convtv}
In this section, we consider the mixed $p$-spin models subject to any choice of temperature parameters. Now
we perturb their Hamiltonians by independent Viana--Bray diluted $2$-spin glass Hamiltonians defined as follows. 
Let $\{\mathbf u_i,\mathbf v_i;i\geq 1\}$  be a family of i.i.d. variables uniformly distributed  over $\{1,\cdots,N\}$, $\{J_i;i\geq 1\}$ be a family of i.i.d. nonzero real-valued variables, and $X_{\alpha N}$ be a Poisson variable with mean $\alpha N$. All these variables are independent, and we assume that $J_i$'s are bounded and their first and third moments match the corresponding moments of a standard Gaussian.
Then the  Viana--Bray diluted spin glass Hamiltonian is defined by, for $\beta'\in \R$,
\begin{align}\label{def:H'}
H'_{\alpha,\beta'}(\spin)=H'_{N,\alpha,\beta'}(\spin)\equiv \beta'\sum_{i=1}^{ X_{\alpha N}}J_i\sigma_{\mathbf u_i,\mathbf v_i},\quad \spin \in \Sigma_N,
\end{align}
where $\sigma_{\mathbf u_i,\mathbf v_i}=\sigma_{(\mathbf u_i,\mathbf v_i)}=\sigma_{\mathbf u_i}\sigma_{\mathbf v_i}$.

\begin{thm}\label{thm:3}
Consider a mixed $p$-spin Hamiltonian defined by (\ref{def:H}) and the Viana--Bray diluted spin glass Hamiltonian as in (\ref{def:H'}). Assume that  
$\xi_{\mathbf i}$ have finite fourth moments and their first three moments match the corresponding moments of a standard Gaussian. The random variables $\xi_{\mathbf i}$ and $J_i$ may be distributed differently.

Let $\alpha_{k,j}\in (0,\infty)$ and $N_{k,j}\in  \Bbb N$ be such that  
\begin{align}\label{aN-hyp}
\lim_{k\to\infty}\alpha_{k,j}=0,\; \lim_{k\to\infty}N_{k,j}=\infty,\;\lim_{j\to\infty}\lim_{k\to\infty}\alpha_{k,j}N_{k,j}=\infty,
\end{align}
and 
\begin{align}\label{limit}
\beta'\mapsto \lim_{j\to\infty}\lim_{k\to\infty} \int_0^1 \frac{t}{\alpha_{k,j} N_{k,j}}\E\log \frac{Z\big(H_{N_{k,j},\xi^t}+H'_{N_{k,j},\alpha_{k,j},\beta'}\big)}{Z(H_{N_{k,j},\xi^t})}\d t\quad\mbox{exists everywhere,}
\end{align}
where the disorder $\xi^t$ is defined by (\ref{def:inter_disorder}).
Then for all $n\in \Bbb N$, it holds that
\begin{align}\label{main2}
\lim_{\ms D\ni \beta'\to  0}\lim_{j\to\infty}\varlimsup_{k\to\infty}\sup_{F:\Sigma^n_{N_{k,j}}\to [-1,1]}\left|\E\langle F\rangle_{H_{N_{k,j},\xi}+ H'_{N_{k,j},\alpha_{k,j},\beta'}}-\E\langle F\rangle_{H_{N_{k,j},g}+H'_{N_{k,j},\alpha_{k,j},\beta'}}\right|=0,
\end{align}
where $\ms D$ is the set of $\beta'\in \R$ at which the convex function defined by (\ref{limit}) is differentiable.
\end{thm}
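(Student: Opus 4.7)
The plan is to run the interpolation $\xi^t=\sqrt t\,\xi+\sqrt{1-t}\,g$ on the perturbed system with Gibbs measure $\langle\cdot\rangle_t$ associated to $H_{N,\xi^t}+H'_{N,\alpha,\beta'}$, and to reduce the Gibbs comparison to a single averaged quantity involving the operators $\D_\i$ that we then kill using the Ghirlanda--Guerra identities together with an auxiliary family of identities produced by the Viana--Bray perturbation. Arguing as in the derivation of (univ), the assumption that the first three moments of $\xi_\i$ match those of $g$ annihilates the $O(t^{1/2})$ term in the remainder $\error^2_{\xi_\i}(\partial^2_x f_\i)$ coming from (gamma:mom), while (vep:bdd) uses $\E\xi_\i^4<\infty$ to control the residual. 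Contributions from pure $p$-spin interactions with $p\geq 3$ are of order $O(N^{2-p})$ and vanish. What survives is the $p=2$ fourth-cumulant term, so that uniformly in $F:\Sigma^n_N\to[-1,1]$,
\[
\E\langle F\rangle_{H_{N,\xi}+H'}-\E\langle F\rangle_{H_{N,g}+H'}=\beta_2^4\Bigl(\tfrac{\E\xi_\k^4-3}{12}\Bigr)\int_0^1 t\Bigl(\tfrac{1}{N^2}\sum_{\i\in\{1,\ldots,N\}^2}\E\langle\D^4_\i F\rangle_t\Bigr)\d t+o(1).
\]
It therefore suffices to show that the inner average tends to zero in the prescribed iterated limit.

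\textbf{Reduction to multi-overlap identities.} Applying Proposition~\ref{prop:Dj} with $m=4$ together with (\ref{sum:RR}) and the values $A(4,1)=1$, $A(4,2)=-8$ from (\ref{eg:A}) expresses $\tfrac{1}{N^2}\sum_\i\E\langle\D^4_\i F\rangle_t$ as the explicit linear combination of Gibbs expectations of $2$-overlaps $R^2_{\ell,\ell'}$ and $4$-overlaps $R^2_{\ell_1,\ldots,\ell_4}$ displayed in the example closing Section~\ref{sec:D4}. The $2$-overlap part of that combination is precisely of the form controlled by Theorem~\ref{thm:1} applied at $p=2$ (which is valid under $\xi^t$ by (\ref{assump:mom})) and by Corollary~\ref{cor:1} via the identity (\ref{eq:D2}); its contribution is $o(1)$ uniformly in $F$.

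\textbf{Exploiting the Viana--Bray perturbation.} For the remaining $4$-overlap combination, the hypothesis (\ref{limit}) is converted into Ghirlanda--Guerra-type identities tailored to $H'_{N,\alpha,\beta'}$. The free-energy ratio $(\alpha_{k,j}N_{k,j})^{-1}\E\log[Z(H_{N,\xi^t}+H')/Z(H_{N,\xi^t})]$ is convex in $\beta'$, so by Griffiths' lemma its derivative at $\beta'\in\ms D$ equals the iterated limit of the corresponding derivative under the Gibbs measure. A Burkholder martingale-difference argument as in (\ref{Burk}), exploiting the boundedness of $J_i$, shows that this derivative self-averages. Using the replica identity $\langle\sigma_{\mathbf u}\sigma_{\mathbf v}\rangle^2=\langle\sigma^1_{\mathbf u}\sigma^1_{\mathbf v}\sigma^2_{\mathbf u}\sigma^2_{\mathbf v}\rangle$ and averaging $\mathbf u,\mathbf v$ uniformly over $\{1,\ldots,N\}$ produces $R^2_{1,2}$; iterating on additional replicas yields identities of the form $\E\langle R^2_{1,2}\psi\rangle$ equals the appropriate GG combination, for $\psi$ a bounded function of finitely many replicas. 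Combined with the $p=2$ identities of Theorem~\ref{thm:1} and the multi-overlap structure coming from Proposition~\ref{prop:Dj}, these Viana--Bray identities reduce the $4$-overlap combination to a linear combination of $2$-overlap combinations, each of which is asymptotically annihilated by (\ref{eq:D2}).

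\textbf{Main obstacle.} The delicate point is the interchange of limits: the Viana--Bray identities are only valid in the double-scaling limit $\alpha_{k,j}\to 0$ with $\alpha_{k,j}N_{k,j}\to\infty$ and only at $\beta'\in\ms D$, while the interpolation error in Proposition~\ref{prop:AIBP} must be controlled uniformly in $(k,j)$ and uniformly in $F$. In particular, the self-averaging of the internal energy of $H'_{N,\alpha,\beta'}$ (the analogue of (\ref{Hlim}) and (\ref{internal-diff}) for the diluted Hamiltonian) must be established without the regular overlap structure of the generic $p$-spin models; the first-and-third matching moments of $J_i$ are what make this possible via Proposition~\ref{prop:AIBP}. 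Once these pieces are in place, the final limit $\beta'\to 0$ through $\ms D$ is taken after $k\to\infty$ and $j\to\infty$, in that order, yielding~(\ref{main2}).
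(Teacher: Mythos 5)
Your overall reduction is right: run the $\sqrt t\,\xi+\sqrt{1-t}\,g$ interpolation on $H_{N,\xi^t}+H'_{N,\alpha,\beta'}$, use the three-matching-moment/fourth-moment argument (Steps~1--2 of Proposition~\ref{prop:1}) to isolate the $p=2$ fourth-cumulant term, and reduce to showing that the iterated limit of $\sup_F\bigl|\int_0^1 t\,\frac{1}{N^2}\sum_\i\E\langle\D^4_\i F\rangle_{t,\alpha,\beta'}\,\d t\bigr|$ vanishes. That matches the paper.

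The gap is in how you propose to kill the $\D^4_\i$ average. You invoke the $A(4,1)=1$, $A(4,2)=-8$ split from Proposition~\ref{prop:Dj}, i.e.\ $\langle\D^4_\i F\rangle=\langle S^{4,n}_\i F\rangle-8\langle S^{2,n}_\i F\rangle$, handle the $S^{2,n}$ part by Corollary~\ref{cor:1}, and then claim the Viana--Bray identities "reduce the $4$-overlap combination to a linear combination of $2$-overlap combinations." That is not what comes out of the dilution, and you never say concretely what the Viana--Bray identities are. The mechanism the paper uses is: (i) establish the self-averaging (SA:VB) of the Viana--Bray internal energy applied to $\Delta_1 F$, via convexity of $q_{N,\xi^t,\alpha}(\beta')$, hypothesis (\ref{limit}), and Panchenko's differentiability argument (not Burkholder); (ii) apply \emph{Poisson} integration by parts (\ref{PIBP}) and Taylor-expand in $\tanh(\beta' J)$, which produces coefficients $A_m=\frac{1}{m!}\E\langle S^{m,n+1}_{\mathbf u,\mathbf v}\sigma^1_{\mathbf u,\mathbf v}\Delta_1 F\rangle$; the matching moments $\E J=\E J^3=0$ and boundedness of $J$ leave $A_1$ and $A_3$ as the surviving leading coefficients; (iii) since $A_1\to 0$ by the Ghirlanda--Guerra identities (which remain valid under $H_{\xi^t}+H'$ by (\ref{fe_notchanged})), divide by $\E J\tanh^3(\beta' J)$ to conclude $A_3\to 0$; and (iv) use the specific algebraic identity $\langle\D^4_\i F\rangle=\sum_{\ell=1}^n\langle S^{3,n+1}_\i\sigma^\ell_\i\Delta_\ell F\rangle-2\langle\D^2_\i F\rangle$ (not the $A(4,\cdot)$ split), whose first summand is \emph{exactly} the $A_3$-type object after summing over $\i$ and normalizing, and whose $\D^2_\i$ term is killed by Corollary~\ref{cor:1}.

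Your $S^{4,n}$/$S^{2,n}$ split doesn't align with this: the dilution gives you control of $\E\langle S^{3,n+1}_{\mathbf u,\mathbf v}\sigma^\ell_{\mathbf u,\mathbf v}\Delta_\ell F\rangle$, not of $\E\langle S^{4,n}_{\mathbf u,\mathbf v} F\rangle$; passing from one to the other requires precisely the algebra in (iv), which is absent from your proposal. You also misattribute the role of $\E J=\E J^3=0$: these are not what give self-averaging of $H'$ (that comes from convexity and (\ref{limit})); they are what kills the $m=0$ and $m=2$ Taylor terms, without which the argument fails. The Poisson integration by parts step, which converts the self-averaging into a statement about the $A_m$'s, is the missing key idea.
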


Let us explain (\ref{aN-hyp}). First, since $\E X_{\alpha N}=\alpha N$ and $\E J_i=0$, the proof of \cite[Theorem~12.2.1]{Talagrand_v2} shows that
\begin{align}\label{def:qN}
q_{N,\xi,\alpha}(\beta')\doteq \frac{1}{\alpha N}\E\log \frac{Z(H_{N,\xi}+H'_{N,\alpha,\beta'})}{Z(H_{N,\xi})}
\end{align}
satisfies 
\begin{align}\label{ineq:qN}
0\leq q_{N,\xi,\alpha}(\beta')\leq \beta'.
\end{align}
Hence, if $\alpha_{N,M}\in (0,\infty)$ are constants such that the following analogues of the first and third assumed limits in (\ref{aN-hyp}) hold:
\begin{align}\label{aN-hyp-1}
\lim_{N\to\infty}\alpha_{N,M}=0\quad\mbox{and }\lim_{M\to\infty}\lim_{N\to\infty}\alpha_{N,M}N=\infty,
\end{align} 
then the convex functions
\[
\beta' \lmt\int_0^1 \frac{t}{\alpha_{N,M}N}\E\log \frac{Z\big(H_{N,\xi^t}+H'_{N,\alpha_{N,M},\beta'}\big)}{Z(H_{N,\xi^t})}\d t;\quad N,M\in \Bbb N,
\]
are uniformly bounded on compacts by (\ref{ineq:qN}). Examples of (\ref{aN-hyp-1}) include the following two cases: 
\begin{enumerate}
\item [(1)] $\displaystyle \lim_{N\to\infty}\alpha_{N,M}N=\alpha_M\in (0,\infty)$ and $\displaystyle\lim_{M\to\infty}\alpha_M=\infty$;
\item [(2)] $\displaystyle\lim_{N\to\infty}\alpha_{N}N=\infty$ and $\displaystyle\lim_{N\to\infty}\alpha_N=0$.
\end{enumerate}
We impose
the somewhat tedious assumption (\ref{limit}), and hence, the limiting scheme in (\ref{main2}), since we do not know whether the standard subadditivity argument applies in this case. Nonetheless, by a standard result of convexity (cf. \cite[Theorem~10.8]{Rockfeller}),
one can choose subsequences $\alpha_{k,j}$ and $N_{k,j}$  from $\alpha_{N,M}$ and $N$  so that (\ref{limit}) is satisfied. 

\begin{proof}[\bf Proof of Theorem~\ref{thm:3}]
We write $\langle \,\cdot\,\rangle_{t,\alpha,\beta'}$ for $\langle \,\cdot\,\rangle_{H_{\xi^t}+H'_{\alpha,\beta'}}$ in this proof.

Plainly, Step~\hyperlink{step3-1}{1}--Step~\hyperlink{step3-2}{2}
in the proof of Proposition~\ref{prop:1} still apply even if we add the present perturbative Viana--Bray Hamiltonians to the interpolating Hamiltonians considered there.
Then an analogue of (\ref{univ}) for the present setup follows, and it is enough to consider the case $\beta_2\neq 0$ and show, for any fixed $n\in \Bbb N$,
\begin{align}\label{VB-4op}
\lim_{\ms D\ni \beta'\to  0}\lim_{j\to\infty}\varlimsup_{k\to\infty}\sup_{F:\Sigma^n_{N_{k,j}}\to [-1,1]}\left|\int_0^1 t\Bigg(\frac{1}{N^2}\sum_{\mathbf i\in \{1,\cdots,N\}^2}\E\langle \D_{\mathbf i}^4F\rangle_{t,\alpha_{k,j},\beta'}\Bigg)\d t\right|=0.
\end{align}
The plan below for (\ref{VB-4op}) may be compared to the derivation of the Ghirlanda--Guerra identities. 
 See \cite[Section~4]{De_Sanctis_2009}.  \\

\noindent \hypertarget{step4-1}{{\bf Step 1.}}
We show that \begin{align}\label{SA:VB}
\begin{split}
 \lim_{j\to\infty}\varlimsup_{k\to\infty}\sup_{F:\Sigma^n_{N_{k,j}}\to [-1,1]}\int_0^1t\Bigg|\E\left\langle  \frac{H'_{N_{k,j},\alpha_{k,j},\beta'}(\spin^1)}{\alpha_{k,j} N_{k,j}\beta'}\Delta_1 F\right\rangle_{t,\alpha_{k,j},\beta'}\Bigg|\d t=0,\;\;\beta'\in \ms D\setminus\{0\},
 \end{split}
\end{align}
where $\ms D$ is defined in the statement of Theorem~\ref{thm:3} and $\Delta_1 F$ is defined by (\ref{def:DeltaF}).

We recall an inequality due to Panchenko which, adapted to the present setup, takes the following form when applied to the test functions $\Delta_1F$: for any $\beta''>\beta'\neq 0$,
\begin{align}
\sup_{F:\Sigma^n_N\to [-1,1]}\Bigg|\E\left\langle\frac{H'_{N,\alpha,\beta'}(\spin^1)}{\alpha N\beta'} 
\Delta_1 F\right\rangle_{t,\alpha,\beta'}\Bigg|\leq 
2\sqrt{\frac{\vep^N_{t,\alpha}(\beta',\beta'')}{\alpha N(\beta''-\beta')}}+8\vep^N_{t,\alpha}(\beta',\beta'')\label{Peq}
\end{align}
(see the proof of \cite[(3.102) in Theorem~3.8]{Panchenko_2013}).
Here, we set
\begin{align*}
\vep^N_{t,\alpha}(\beta',\beta'')\doteq &\frac{1}{\alpha  N}\int_{\beta'}^{\beta''}\E\left\langle \Bigg(\sum_{i=1}^{ X_{\alpha N}}J_i\sigma_{\mathbf u_i,\mathbf v_i}-\left\langle \sum_{i=1}^{ X_{\alpha N}}J_i\sigma_{\mathbf u_i,\mathbf v_i}\right\rangle_{t,\alpha,x}\Bigg)^2\right\rangle_{t,\alpha,x} \d x\\
\leq &\frac{q_{N,\xi^t,\alpha}(\beta''+y)-q_{N,\xi^t,\alpha}(\beta'')}{y}-\frac{q_{N,\xi^t,\alpha}(\beta')-q_{N,\xi^t,\alpha}(\beta'-y)}{y},\quad \forall\; y\in (0,\infty),
\end{align*}
and the function $q_{N,\xi^t,\alpha}(\beta')$ is defined by (\ref{def:qN}). Since $\E  X_{\alpha N}=\alpha N$, it follows from the mean-value theorem that the foregoing bound for $\vep^N_{t,\alpha}(\beta',\beta'')$ is uniformly bounded by $2$.  

The required limit (\ref{SA:VB}) thus follows from (\ref{Peq}) and the third assumed limit in (\ref{aN-hyp}) if we pass the following limits in order: $k\to\infty$, $j\to\infty$, $\beta''\to \beta'$ and finally $y\to 0$ (cf. the proof of \cite[Theorem~3.8]{Panchenko_2013}). 
\\

\noindent \hypertarget{step4-2}{{\bf Step 2.}} By the Poisson integration by parts, we get, for $\beta'\neq 0$ and $F:\Sigma^n_N\to [-1,1]$,
\begin{align}\label{PIBP}
\E\left\langle \frac{H'_{N_{k,j},\alpha_{k,j},\beta'}(\spin^1)}{\alpha_{k,j} N_{k,j}\beta'}\Delta_1 F\right\rangle_{t,\alpha_{k,j},\beta'}=\E \left\langle \frac{ J\prod_{i=1}^{n+1}\big(1+ \tanh(\beta'J)\sigma^i_{\mathbf u,\mathbf v}\big)\sigma^1_{\mathbf u,\mathbf v}\Delta_1 F}{\big(1+ \tanh(\beta'J)\langle \sigma_{\mathbf u,\mathbf v}\rangle_{t,\alpha_{k,j},\beta'}\big)^{n+1}}\right\rangle_{t,\alpha_{k,j},\beta'},
\end{align} 
where $J,\mathbf u,\mathbf v$ are independent of the Hamiltonian defining $\langle \,\cdot\,\rangle_{t,\alpha_{k,j},\beta'}$ and are distributed as $J_1$, $\mathbf u_1,\mathbf v_1$, respectively (cf. \cite{Guerra_2004, De_Sanctis_2009}). Consider the Taylor series of the $\E\langle \,\cdot\,\rangle_{t,\alpha_{k,j},\beta'}$-integrand on the right-hand side of (\ref{PIBP}) in $\tanh(\beta'J)$. We study explicit forms of the Taylor coefficients in this step and then pass suitable limits for the terms 
in the remaining steps. The reader will see particular forms of the Ghirlanda--Guerra identities  in the first-order coefficients and the normalized sums in the required equality (\ref{VB-4op}) in the third-order coefficients.

To obtain explicit forms of the Taylor coefficients, we use the following  identity:
\begin{align}
\frac{\sum_{m=0}^{n+1}A_mx^m}{(1-Bx)^{n+1}}
=&\sum_{m=0}^\infty \left(\sum_{a=0}^{\min\{m,n+1\}}{n+m-a\choose n }A_aB^{m-a}\right)x^m, \label{Taylor1}
\end{align}
where the binomial coefficients follow by counting the number of solutions $(m_1,\cdots,m_{n+1})\in \Bbb Z_+^{n+1}$ to $m_1+m_2+\cdots+m_{n+1}=m$ for every $m\in \Bbb Z_+$ (cf. \cite[page 18]{Stanley_book}). The series on the right-hand side of (\ref{Taylor1}) converges absolutely for $|x|<1/|B|$.
By (\ref{Taylor1}), the $\E\langle \,\cdot\,\rangle_{t,\alpha_{k,j},\beta'}$-expectation of 
the $m$-th term, $m\in\Bbb Z_+$, in the Taylor series under consideration is given by the product of $\E J\tanh^m(\beta' J)$ and the following term:
\begin{align}
\begin{split}\label{def:Aj}
A_m(t,\alpha_{k,j},\beta')
\doteq &\sum_{a=0}^{\min\{m,n+1\}}(-1)^{m-a}{n+m-a\choose n}\\
&\times\sum_{1\leq \ell_1<\ell_2<\cdots<\ell_a\leq n+1}\E\langle \sigma^{\ell_1,\cdots,\ell_a,1}_{\mathbf u,\mathbf v}\Delta_1 F\rangle_{t,\alpha_{k,j},\beta'} \langle \sigma_{\mathbf u, \mathbf v}\rangle_{t,\alpha_{k,j},\beta'}^{m-a}
\end{split}\\
=&\frac{1}{m!}\E\langle S^{m,n+1}_{\mathbf u,\mathbf v}\sigma^1_{\mathbf u,\mathbf v}\Delta_1 F\rangle_{t,\alpha_{k,j},\beta'}\label{Aj:overlap},
\end{align}
where the notation (\ref{def:Sj}) is used in (\ref{Aj:overlap}).
With the convention as in (\ref{convention:Dj}), we can replace $\min\{m,n+1\}$ with $m$ in the definition (\ref{def:Aj}) of $A_m(t,\alpha_{k,j},\beta')$, and the sum is not changed. Notice that $A_m(t,\alpha_{k,j},\beta')$ is bounded by a constant depending only on $m$ and $n$, and  we can write
\begin{align}\label{Ajoverlap}
\E\langle \sigma^{\ell_1,\cdots,\ell_a,1}_{\mathbf u,\mathbf v}\Delta_1 F\rangle_{t,\alpha_{k,j},\beta'} \langle \sigma_{\mathbf u, \mathbf v}\rangle_{t,\alpha_{k,j},\beta'}^{m-a}=\E\langle R^2_{\ell_1,\cdots,\ell_a,1,n+2,\cdots,n+1+m-a}\Delta_1 F\rangle_{t,\alpha_{k,j},\beta'}
\end{align}
by (\ref{sum:RR}) 
so that the multi-overlaps defined by (\ref{def:4overlap}) come into play through $A_m(t,\alpha_{k,j},\beta')$'s according to (\ref{def:Aj}). \\

\noindent \hypertarget{step4-3}{{\bf Step 3.}} 
Now by the discussion in Step~\hyperlink{step4-2}{2}, we get, for $\beta'\neq 0$,
\begin{align}
&\E\left\langle \frac{H'_{N_{k,j},\alpha_{k,j},\beta'}(\spin^1)}{\alpha_{k,j} N_{k,j}\beta'}\Delta_1 F\right\rangle_{t,\alpha_{k,j},\beta'}\notag\\
\begin{split}\label{SA:ALG0}
=&\,\sum_{m=1}^3\E J\tanh^m(\beta' J)\cdot A_m(t,\alpha_{k,j},\beta')+\mathcal O\big((\beta')^4\big)
\end{split}\\
=&\,\E J\tanh(\beta' J)\cdot A_1(t,\alpha_{k,j},\beta')+\E J\tanh^3(\beta'J)\cdot A_3(t,\alpha_{k,j},\beta')+o\big((\beta')^{3}\big),\quad \beta'\to 0,\label{SA:ALG}
\end{align}
where (\ref{SA:ALG0}) starts with $\E J\tanh(\beta' J)$ since $\E J=0$ and the term $\mathcal O\big((\beta')^4\big)$ in (\ref{SA:ALG0}) follows since $J$ is bounded.
Also, we get the term $o\big((\beta')^3\big)$ in (\ref{SA:ALG}) because the summand in (\ref{SA:ALG0}) indexed by $m=2$ can be handled as follows:
\[
\lim_{\beta'\to 0}\E J\frac{ \tanh^2(\beta'J)}{(\beta')^3}=
\lim_{\beta'\to 0}\E J\left(\frac{ \tanh^2(\beta'J)-(\beta' J)^2}{(\beta')^3}\right)=\E\lim_{\beta' \to 0}J\left(\frac{ \tanh^2(\beta'J)-(\beta' J)^2}{(\beta')^3}\right)=0.
\]
Here, the first equality uses the assumption that $\E J^3=0$ and
 the last equality follows from the Taylor series of the even function $x\mapsto \tanh^2(x)$, the boundedness of $J$ and dominated convergence.

Notice that the limiting behaviors of the terms $\mathcal O\big((\beta')^4\big)$ and $o\big((\beta')^{3}\big)$ in (\ref{SA:ALG0}) and (\ref{SA:ALG}) are uniform in $k,j\in \Bbb N$, $t\in [0,1]$ and $F:\Sigma^n_{N_{k,j}}\to [-1,1]$.\\

\noindent \hypertarget{step4-4}{{\bf Step 4.}} 
In this step, we show that 
\begin{align}
\mbox{}\lim_{\ms D\ni \beta'\to 0} \lim_{j\to\infty}\varlimsup_{k\to\infty}\sup_{F:\Sigma^n_{N_{k,j}}\to [-1,1]}\left|\int_0^1 t A_3(t,\alpha_{k,j},\beta') \d t\right|=0.  \label{lim:A3}
\end{align}

By dominated convergence, the elementary inequality $|\tanh(x)|\leq |x|$ and $\E J^4\in (0,\infty)$, it holds that
\[
\lim_{\beta'\to 0}\frac{o\big((\beta')^3\big)}{\E J\tanh^3 (\beta' J) }=0.
\]
Hence, we obtain from (\ref{SA:ALG}) that
\begin{align}
&\lim_{\ms D\ni \beta'\to 0} \lim_{j\to\infty}\varlimsup_{k\to\infty}\sup_{F:\Sigma^n_{N_{k,j}}\to [-1,1]}\left|\int_0^1 t A_3(t,\alpha_{k,j},\beta')\d t\right|\notag \\
\leq &\lim_{\ms D\ni \beta'\to 0} \lim_{j\to\infty}\varlimsup_{k\to\infty}\sup_{F:\Sigma^n_{N_{k,j}}\to [-1,1]}\left|\frac{1}{\E J\tanh^3(\beta' J)}\int_0^1t\E\left\langle \frac{H'_{N_{k,j},\alpha_{k,j},\beta'}(\spin^1)}{\alpha_{k,j} N_{k,j}\beta'}\Delta_1 F\right\rangle_{t,\alpha_{k,j},\beta'}\d t\right|\notag\\
&+\lim_{\ms D\ni \beta'\to 0} \lim_{j\to\infty}\varlimsup_{k\to\infty}\sup_{F:\Sigma^n_{N_{k,j}}\to [-1,1]}\left|\frac{\E J\tanh(\beta' J)}{\E J\tanh^3(\beta' J)}\int_0^1 t A_1(t,\alpha_{k,j},\beta')\d t\right|\notag\\
=&\lim_{\ms D\ni \beta'\to 0} \lim_{j\to\infty}\varlimsup_{k\to\infty}\sup_{F:\Sigma^n_{N_{k,j}}\to [-1,1]}\left|\frac{\E J\tanh(\beta' J)}{\E J\tanh^3(\beta' J)}\int_0^1 t A_1(t,\alpha_{k,j},\beta')\d t\right|,\label{SA:VB2}
\end{align}
where the last equality follows from (\ref{SA:VB}).

By the proof of \cite[Theorem~12.2.1]{Talagrand_v2} and the first and second assumed limits in (\ref{aN-hyp}), we have 
\begin{align}\label{fe_notchanged}
\lim_{k\to\infty} \frac{1}{ N_{k,j}}\E\log \frac{Z\big(H_{N_{k,j},\xi^t}+H'_{N_{k,j},\alpha_{k,j},\beta'}\big)}{ Z\big(H_{N_{k,j},\xi^t}\big)}=0,\quad\forall\; j,\,t,\,\beta',
\end{align}
so that Theorem~\ref{thm:1} and Corollary~\ref{cor:1} also apply under the Gibbs expectations defined by $H_{N_{k,j},\xi^t}+H'_{N_{k,j},\alpha_{k,j},\beta'}$ in the limit $k\to\infty$. 
Hence, by the explicit form (\ref{def:Aj}) of $A_1(t,\alpha_{k,j},\beta')$ and the assumption $\beta_2\neq 0$, we have
\begin{align}\label{lim:A1=0}
\lim_{k\to\infty}\sup_{F:\Sigma^n_{N_{k,j}}\to [-1,1]}|A_1(t,\alpha_{k,j},\beta')|
 =0,\quad \forall\;j,t,\beta'.
\end{align}
The required equality (\ref{lim:A3}) thus follows from the foregoing equality and (\ref{SA:VB2}).\\

\noindent \hypertarget{step4-5}{{\bf Step 5.}} 
We complete the proof in this step by showing that (\ref{lim:A3}) implies  (\ref{VB-4op}).

Let $\langle\,\cdot\,\rangle$ be the Gibbs expectation where the Hamiltonian is the sum of a general Hamiltonian over $N$ sites and a mixed $p$-spin Hamiltonian as in (\ref{def:H}). Observe that
\begin{align}\label{Dialt}
\langle \D_\i F\rangle=\sum_{\ell=1}^n \langle \sigma^\ell_\i\Delta_\ell F\rangle.
\end{align}
Hence,
applying (\ref{Gibbs:der}) and the continuity of the Gibbs expectations in temperature parameters
in the first and last equalities below and Proposition~\ref{prop:Dj} and (\ref{eg:A}) in the second and third equalities, we obtain
\begin{align*}
\langle \D^4_\i F\rangle =\sum_{\ell=1}^n \langle \D^3_\i \sigma^\ell_\i\Delta_\ell F\rangle
=&\sum_{\ell=1}^n \langle S_\i^{3,n+1}\sigma^\ell_\i\Delta_\ell F\rangle-2\sum_{\ell=1}^n\langle S_\i^{1,n+1}\sigma^\ell_\i\Delta_\ell F\rangle\\
=& \sum_{\ell=1}^n \langle S_\i^{3,n+1}\sigma^\ell_\i\Delta_\ell F\rangle-2\sum_{\ell=1}^n \langle \D^1_\i \sigma^\ell_\i\Delta_\ell F\rangle\\
=&\sum_{\ell=1}^n \langle S_\i^{3,n+1}\sigma^\ell_\i\Delta_\ell F\rangle -2\langle \D^2_\i F\rangle.
\end{align*}
Then from the last equality, we get
\begin{align}
&\Bigg|\int_0^1 t\Bigg(\frac{1}{N^2}\sum_{\mathbf i\in \{1,\cdots,N\}^2}\E\langle \D_{\mathbf i}^4F\rangle_{t,\alpha_{k,j},\beta'}\Bigg)\d t\Bigg|\notag\\
\leq &\sum_{\ell=1}^n\Bigg|\int_0^1 t 
\Bigg(\E\langle S_{\mathbf u,\mathbf v}^{3,n+1}\sigma^\ell_{\mathbf u,\mathbf v}\Delta_\ell F\rangle_{t,\alpha_{k,j},\beta'}\Bigg) \d t\Bigg|+2\Bigg|\int_0^1 t\Bigg(\frac{1}{N^2}\sum_{\mathbf i\in \{1,\cdots,N\}^2}\E\langle \D_{\mathbf i}^2F\rangle_{t,\alpha_{k,j},\beta'}\Bigg)\d t\Bigg|.\label{D4final}
\end{align}
The first term in (\ref{D4final}) tends to zero  in the same mode of uniform convergence in (\ref{VB-4op}), if we apply (\ref{Aj:overlap}), (\ref{lim:A3}) and the analogues of (\ref{lim:A3}) where $\sigma^1_{\mathbf u,\mathbf v}\Delta_1 F$ are replaced by $\sigma^\ell_{\mathbf u,\mathbf v}\Delta_\ell F$ for $2\leq \ell\leq n$. As in the proof of (\ref{lim:A1=0}),
we have the same mode of uniform convergence to zero of the second term in (\ref{D4final}) by
Corollary~\ref{cor:1}, Proposition~\ref{prop:Dj} and (\ref{eg:A}). This proves
(\ref{VB-4op}). 
The proof is complete. 
\end{proof}

\section{Approximate integration by parts with cutoffs}\label{sec:AIBP}
In this section we prove the following result, which is used throughout this paper. See also \cite[Section~3]{C_05}.

\begin{prop}\label{prop:AIBP}
Let $\xi$ be a real-valued random variable with zero mean and unit variance.
Then for any $f:\R\to \R$ with a bounded continuous third-order derivative, we have
\begin{align}\label{IBP2}
\E\xi f(\xi)=\E \partial_xf(\xi)+\error^2_\xi\big(\partial_x^2f\big),
\end{align}
where 
\begin{align}\label{gamma2}
\error^2_\xi\big(\partial_x^2f\big)\doteq \E\xi\int_0^\xi (\xi-x)\partial_x^2f(x)\d x-\E\int_0^\xi(\xi-x)\partial_x^3f(x)\d x.
\end{align}
The first expectation on the right-hand side of (\ref{gamma2}) is well-defined since
\begin{align}
\label{IBP2-1}
\left|\xi\int_0^\xi (\xi-x)\partial_x^2f(x)\d x\right|\leq &\,|\xi|\int_0^{|\xi|}\min\big\{2\|\partial_xf\|_\infty,\|\partial_x^2f\|_\infty x\big\}\d x.
\end{align}
\end{prop}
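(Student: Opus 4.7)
The identity (\ref{IBP2}) is essentially a form of second-order Taylor expansion tailored to the moment conditions $\E\xi = 0$ and $\E\xi^2 = 1$. My plan is to expand $f(\xi)$ via Taylor's formula with integral remainder, exploit the two moments to eliminate the zeroth- and first-order Taylor coefficients of the expectation, and then recognize the remaining correction as $\error^2_\xi(\partial_x^2 f)$ via a further integration by parts.

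The first step is to start from
\[
f(\xi) = f(0) + \xi\, \partial_x f(0) + \int_0^\xi (\xi - x)\partial_x^2 f(x)\,\d x.
\]
Multiplying by $\xi$ and taking expectations, the moment conditions yield
\[
\E[\xi f(\xi)] = \partial_x f(0) + \E\Bigl[\xi \int_0^\xi (\xi - x)\partial_x^2 f(x)\,\d x\Bigr].
\]
Comparing with (\ref{IBP2}), it then suffices to prove the identity $\partial_x f(0) = \E\partial_x f(\xi) - \E\int_0^\xi (\xi - x)\partial_x^3 f(x)\,\d x$. This follows from a single integration by parts in the inner integral (with $\xi$ treated as a parameter), which gives $\int_0^\xi (\xi - x)\partial_x^3 f(x)\,\d x = \partial_x f(\xi) - \partial_x f(0) - \xi\, \partial_x^2 f(0)$; taking expectation and using $\E\xi = 0$ finishes the proof of (\ref{IBP2}).

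For the cutoff inequality (\ref{IBP2-1}), the plan is to apply Fubini's theorem to rewrite the inner integral as $\int_0^\xi (\partial_x f(y) - \partial_x f(0))\,\d y$, using the elementary identity $\xi - x = \int_x^\xi \d y$. The integrand is then bounded pointwise in two ways: trivially $|\partial_x f(y) - \partial_x f(0)| \leq 2\|\partial_x f\|_\infty$, while the mean value theorem gives $|\partial_x f(y) - \partial_x f(0)| \leq \|\partial_x^2 f\|_\infty |y|$, so their minimum majorizes the integrand; multiplying by $|\xi|$ yields (\ref{IBP2-1}). I do not anticipate any real obstacle, as the entire proposition is a routine consequence of Taylor's theorem combined with the prescribed moments of $\xi$. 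The only technical point worth flagging is the integrability of the expectations appearing in (\ref{IBP2}), which is automatic in the applications throughout the paper because $f$ arises as a Gibbs expectation of a bounded function and hence has all derivatives bounded.
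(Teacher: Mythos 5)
Your proof is correct and follows essentially the same route as the paper: Taylor's theorem with integral remainder applied to $f$ and to $\partial_x f$, combined with the moment conditions $\E\xi=0$, $\E\xi^2=1$, and the rewriting $\int_0^\xi(\xi-x)\partial_x^2f = \int_0^\xi(\partial_xf(x)-\partial_xf(0))\,\d x$ for the cutoff bound (which you obtain via Fubini, the paper via integration by parts — same identity). The only cosmetic difference is that the paper packages the two Taylor manipulations into one display, (\ref{Phi:taylor}), before taking expectations, whereas you take expectations first and then expand $\partial_x f(\xi)$ around $0$ to replace $\partial_xf(0)$.
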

\begin{proof}[\bf Proof.]
By Taylor's theorem, the following equation holds:
\begin{align}\label{Phi:taylor}
\begin{split}
\xi f(\xi)=&\,\xi f(0)+\xi^2 \partial_x f(0)+ \int_0^\xi \partial^2_x f(x)\d x\\
&+\int_0^\xi \left(-\partial_x^2f(\xi)+\int_x^\xi \partial_x^3 f(y)\d y\right)\d x+\xi\int_0^\xi (\xi-x)\partial^2_xf(x)\d x.
\end{split}
\end{align}
The assumption $\E\xi=0$ and $\E\xi^2=1$ implies that the sum of expectations of the first three terms on the right-hand side of (\ref{Phi:taylor}) reduces to $\E\partial_x f(\xi)$. Also, expectations of the last two terms in (\ref{Phi:taylor}) sum to the last term in (\ref{IBP2}) since 
\[
\int_0^\xi -\partial_x^2 f(\xi)\d x=-\xi\partial_x^2 f(0)-\xi\int_0^\xi\partial_x^3 f(x)\d x
\]
and $\E\xi=0$. We have proved (\ref{IBP2}). To see (\ref{IBP2-1}), one may
use the identity: 
\[
\xi\int_0^\xi (\xi-x)\partial_x^2 f(x)\d x=\xi \int_0^\xi \big(\partial_xf(x)-\partial_x f(0)\big)\d x
\]
and the mean-value theorem. \end{proof}

\end{document}